\newcommand\blfootnote[1]{%
  \begingroup
  \renewcommand\thefootnote{}\footnote{#1}%
  \addtocounter{footnote}{-1}%
  \endgroup
}
\newtheorem{thm}{Theorem}[section]
\newtheorem{cor}[thm]{Corollary}
\newtheorem{prop}[thm]{Proposition}
\newtheorem{lem}[thm]{Lemma}
\newtheorem{conj}[thm]{Conjecture}
\theoremstyle{definition}
\newtheorem{defn}[thm]{Definition}
\newenvironment{customthm}[1]
  {\innercustomthm}
  {\endinnercustomthm}
\theoremstyle{remark}
\newtheorem{rem}[thm]{Remark}
\let\c@equation\c@thm
\numberwithin{equation}{section}
\def\subsection{\@startsection{subsection}{3}%
  \z@{.5\linespacing\@plus.7\linespacing}{.1\linespacing}%
  {\bfseries}}
\newcommand{\N} { \mathbb{N}}
\newcommand{\A} { \mathbb{A}}
\newcommand{\R} { \mathbb{R}}
\newcommand{\cut} { \backslash}
\DeclareMathOperator{\AP}{AP}
\DeclareMathOperator{\ap}{ap}
\DeclareMathOperator{\E}{\mathbb{E}}
\renewcommand{\P}{\mathbb{P}}
\@date \else {\vskip3ex \centering\footnotesize\@date\par\vskip1ex}\fi
\else \@footnotetext{\@setdate}\fi}
\title{$(k,\lambda)$-anti-powers and other patterns in words}
\author{Amanda Burcroff}
\date{ \vspace{-0.3in}{\it University of Michigan\\ 500 S. State St.\\  Ann Arbor, MI 48109 United States}}
\begin{document}
\maketitle
\begin{abstract}
    \vspace{-0.5in}
    Given a word, we are interested in the structure of its contiguous subwords split into $k$ blocks of equal length, especially in the homogeneous and anti-homogeneous cases.  We introduce the notion of $(\mu_1,\dots,\mu_k)$-block-patterns, words of the form $w = w_1\cdots w_k$ where, when $\{w_1,\dots,w_k\}$ is partitioned via equality, there are $\mu_s$ sets of size $s$ for each $s \in \{1,\dots,k\}$.  This is a generalization of the well-studied $k$-powers and the $k$-anti-powers  recently introduced by Fici, Restivo, Silva, and Zamboni, as well as a refinement of the  $(k,\lambda)$-anti-powers introduced by Defant. We generalize the anti-Ramsey-type results of Fici et al. to $(\mu_1,\dots,\mu_k)$-block-patterns and improve their bounds on $N_\alpha(k,k)$, the minimum length such that every word of length $N_\alpha(k,k)$ on an alphabet of size $\alpha$ contains a $k$-power or $k$-anti-power.  We also generalize their results on infinite words avoiding $k$-anti-powers to the case of $(k,\lambda)$-anti-powers.  We provide a few results on the relation between $\alpha$ and  $N_\alpha(k,k)$ and find the expected number of $(\mu_1,\dots,\mu_k)$-block-patterns in a word of length $n$.
\end{abstract}

\blfootnote{{\it E-mail address:} burcroff@umich.edu}
\vspace{-0.5in}
\section{Introduction}
In 1975 Erd{\H o}s, Simonivits, and S{\' o}s \cite{ESS} introduced anti-Ramsey theory, the idea that sufficiently large partitioned structures cannot avoid anti-homogeneous substructures. Their investigation was initially graph-theoretic, but with time anti-Ramsey-type results have permeated many areas of combinatorics, including the studies of Sidon sets, canonical Ramsey theory, and the spectra of colorings \cite{Laci, Rad, SS}. The study of homogeneous and anti-homogeneous substructures can also be extended to {\it words}, finite or infinite (to the right) sequences of letters from a fixed alphabet. The substructures of interest are contiguous subwords, known as {\it factors}.  A well-studied type of regularity in words concerns {\it k-powers}, that is, words of the form $u^k = uu \cdots u$ (concatenated $k$ times) for some nonempty word $u$ (see, for example, \cite{Lot}).  Recently Fici et al. \cite{FRSZ} introduced a notion of anti-regularity in words through their definition of {\it $k$-anti-powers}.  

\begin{defn}
Let $|u|$ denote the length of a word $u$.  A {\it $k$-anti-power} is a word $w$ of the form 
$$w = w_1w_2\cdots w_k$$
such that $|w_1| = \cdots = |w_k|$ and $w_1,\dots,w_k$ are distinct.
\end{defn}

Fici et al. \cite {FRSZ} were able to show several properties of anti-powers in words, including anti-Ramsey results concerning the existence of $\ell$-powers or $k$-anti-powers.  Defant \cite{Def} and Narayanan \cite{Nar} showed that $\ap(t,k)$, the minimum $m > 0$ for which the factor of length $km$ beginning at the first index of the famous Thue-Morse word $t$ is a $k$-anti-power, grows linearly in $k$.  Defant also introduced the notion of $(k,\lambda)$-anti-powers, which is a generalization of $k$-anti-powers.

\begin{defn}
A {\it $(k,\lambda)$-anti-power} is a word $w$ of the form 
$$w = w_1w_2\cdots w_k$$
such that $|w_1| = \cdots = |w_k|$ and $|\{i: w_i = w_j\}| \leq \lambda$ for each fixed $j \in \{1,\dots,k\}$.
\end{defn}

Note that when $\lambda = 1$, this is precisely the definition of a $k$-anti-power.  Whenever such a generalization is nontrivial, we prove that the results of Fici et al. in \cite{FRSZ} concerning $k$-anti-powers generalize to the case of $(k,\lambda)$-anti-powers.  In fact, many of these results can be strengthened by enforcing a particular structure on the partition of the blocks by equality.  We generalize the notions of $k$-powers and $k$-anti-powers while refining the $(k,\lambda)$-anti-powers through the introduction of {\it $(\mu_1,\dots,\mu_k)$-block-patterns}.  
\begin{defn}\label{block patterns}
Let $\mu_1, \dots, \mu_k$ be nonnegative integers satisfying $\sum_{s = 1}^k s\mu_s = k$.  A {\it $(\mu_1, \cdots, \mu_k)$-block-pattern} is a word of the form
$w = w_1\cdots w_k$ where, if the set $\{1,\dots,k\}$ is partitioned via the rule $i \sim j \iff w_i = w_j$, there are $\mu_s$ parts of size $s$ for all $1 \leq s \leq k$.   
\end{defn}

For example, $10\;01\;00\;01\;10$ is a $(1,2,0,0,0)$-block-pattern.  Let $P_{k,\leq \lambda}$ denote the set of $k$-tuples of natural numbers $(\mu_1,\dots,\mu_k)$ such that $\sum_{s = 1}^k s\mu_s = k$ and $\mu_s = 0$ for $s > \lambda$.  These correspond to the partitions of $k$ such that each part has size at most $\lambda$.  We can relate $(\mu_1,\dots,\mu_k)$-block-patterns to $(k,\lambda)$-anti-powers via the following observation.

\begin{rem}
Let $\mathcal{AP}_\A(k,\lambda)$ be the set of $(k,\lambda)$-anti-powers on an alphabet $\A$.  Let $\mathcal{BP}_\A(\mu_1,\dots,\mu_k)$ be the set of $(\mu_1,\dots,\mu_k)$-block-patterns on $\A$.  Then
$$\mathcal{AP}_\A(k,\lambda) = \bigcup_{(\mu_1,\dots,\mu_k) \in P_{k,\leq \lambda}} \mathcal{BP}_\A(\mu_1,\dots,\mu_k).$$
In particular, the $k$-anti-powers are precisely the $(k,0,\dots,0)$-block-patterns, and moreover the $k$-powers are precisely the $(0,\dots,0,1)$-block-patterns.
\end{rem}

The generalizations of the anti-Ramsey results of Fici et al. in \cite{FRSZ} to the case of $(\mu_1,\dots,\mu_k)$-block-patterns are the focus of Section \ref{block general}.  In particular, we obtain bounds on the sizes of words avoiding powers or block-patterns with at most $\sigma$ pairs of equal blocks. In Section \ref{generalize}, we generalize the results of \cite{FRSZ} on avoiding $k$-anti-powers in infinite words to $(k,\lambda)$-anti-powers.  We also observe that Sturmian words have anti-powers of every order starting at each index.

A slight strengthening of the arguments of Fici et al. in \cite{FRSZ} also provide better bounds for $N_\alpha(k,k)$, the smallest positive integer such that every word of length $N_\alpha(k,k)$ over an alphabet of size $\alpha$ contains a $k$-power or $k$-anti-power.  Namely, it is shown in \cite{FRSZ} that for $k > 2$, $k^2 - 1 \leq N_\alpha(k,k) \leq k^3{k \choose 2}$.  In Section \ref{ramsey}, we improve both the lower and upper bounds according to the following theorem.
\begin{customthm}{5.1}
For any $k > 3$, 
$$ 2k^2 - 2k \leq N_\alpha(k,k) \leq (k^3 - k^2 + k){k \choose 2}.$$
\end{customthm}
In Section \ref{ramsey} we also investigate how the size of the alphabet affects $N_\alpha(k,k)$.  In Section \ref{expect}, we return to the more general setting and compute the expected number of $(\mu_1,\dots,\mu_k)$-block-patterns in a word of length $n$.

\section{Preliminaries}\label{prelims}
Let $\N = \{1,2,3,\dots\}$.  The $i^{th}$ letter of a word $x$ is denoted $x[i]$, and for $i < j$ the contiguous substring beginning at the $i^{th}$ letter and ending with the $j^{th}$ is denoted $x[i..j]$.  A word $v$ is a {\it factor} of $x$ if $x = uvw$ for words $u$ and $w$.  In the case that $u$ is empty, $v$ is a {\it prefix} of $x$, and if $w$ is empty, then $v$ is a {\it suffix} of $x$. The suffix of $x$ beginning at the $j^{th}$ index of $x$ is denoted $x_{(j)}$.  If $w$ is both a prefix and suffix of $x$, then $w$ is a {\it border} of $x$.  

A word is called {\it recurrent} if every finite factor appears infinitely many times in the word.  A word $x$ is called {\it eventually periodic} if there exists an index $j \geq 0$ and a finite word $u$ such that $x_{(j)} = u^\omega$; otherwise $x$ is called {\it aperiodic}.  A word is called {\it $\omega$-power-free} if for every finite factor $u$, there exists an $\ell \in \N$ such that $u^\ell$ is not a factor.  Note that a word that avoids $k$-powers for some $k \in \N$ is $\omega$-power-free, but the converse is not necessarily true.

Let $[\alpha] = \{1,\dots,\alpha\}$.  The {\it lower density } and {\it upper density }  of a subset $S$ of $\N$ are given respectively by 
$$\underline{d}(S) = \liminf_{n \to \infty} \frac{|S \cap [n]|}{n} \;\;\;\text{ and }\;\;\; \overline{d}(S) = \limsup_{n \to \infty} \frac{|S \cap [n]|}{n}.$$

\section{Generalization of an Anti-Ramsey result to $(\mu_1,\dots,\mu_k)$-block-patterns}\label{block general}

A main result of Fici et al. \cite{FRSZ} is that every infinite word contains either powers of all orders or anti-powers of all orders.  Since powers are homogeneous substructures whereas the anti-powers are anti-homogeneous, one may wonder if similar results can be demonstrated for substructures between these extremes.  We will generalize their result to the case of $(\mu_1,\dots,\mu_k)$-block-patterns in infinite words.  The density bounds rely on the number of pairs of equal blocks that are forced in the prefixes of length $km, \dots, k(m + \beta)$ for some $m,\beta$.  The following definition is created to account for these pairs.

\begin{defn}
Let $D(x,k,\sigma)$ be the set of $m \in \N$ such that the prefix of the word $x$ of length $km$ is a $(\mu_1,\dots,\mu_k)$-block-pattern satisfying $\sum_{s = 1}^k \mu_s{s \choose 2} \leq \sigma$.
\end{defn}

Note that $D(x,k,\sigma)$ is closed downward with respect to the dominance order.  That is, if $m \in D(x,k,\sigma)$ and $m' \in \N$ are such that $x[1..km]$ is a $(\mu_1,\dots,\mu_k)$-block-pattern and $x[1..km']$ is a $(\mu'_1,\dots,\mu'_k)$-block-pattern satisfying $\sum_{s = 1}^\ell \mu_s \geq \sum_{s = 1}^\ell \mu'_s$ for each $\ell \in \{1,\dots,k\}$, then $m' \in D(x,k,\sigma)$.

For the proof of Theorem \ref{inf powers or patterns}, we make use of the following lemma of Fici, Restivo, Silva, and Zamboni.  
\begin{lem}\label{border implies power}
\emph{(\cite{FRSZ}, Lemma 3)}
Let $v$ be a border of a word $w$ and let $u$ be the word such that $w = uv$.  If $\ell$ is an integer such that $|w| \geq \ell |u|$, then $u^\ell$ is a prefix of $w$.  
\end{lem}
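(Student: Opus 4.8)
The plan is to prove this by induction on $\ell$, establishing the slightly more general statement that $u^j$ is a prefix of $w$ for every positive integer $j$ with $j|u| \leq |w|$, and then specializing to $j = \ell$. The conceptual content underlying the whole argument is that having the border $v$ forces $|u| = |w| - |v|$ to be a period of $w$: since $v$ is simultaneously a prefix and a suffix of $w = uv$, comparing the letter in position $i$ (coming from the prefix occurrence of $v$) with the letter in position $|u| + i$ (coming from the suffix occurrence of $v$) yields $w[i] = w[|u| + i]$ for all $1 \leq i \leq |v|$. Once $|u|$ is recognized as a period, the lemma is precisely the assertion that a word of period $p$ agrees with the $p$-periodic extension of its first $p$ letters, read off up to length $\ell|u| \leq |w|$.

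For the induction itself, the base case $j = 1$ is immediate, since $u$ is by definition a prefix of $w = uv$. For the inductive step, I would assume that $u^j$ is a prefix of $w$ and that $(j+1)|u| \leq |w|$. The key observation is that $v$ is itself a prefix of $w$ and that $|v| = |w| - |u| \geq j|u|$, so the length-$j|u|$ prefix of $v$ coincides with the length-$j|u|$ prefix of $w$, which equals $u^j$ by the inductive hypothesis. Reading off the first $(j+1)|u|$ letters of $w = uv$, one obtains $u$ followed by the length-$j|u|$ prefix of $v$, namely $u \cdot u^j = u^{j+1}$, so $u^{j+1}$ is a prefix of $w$. Setting $j = \ell$ then yields the lemma.

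The hypothesis $|w| \geq \ell|u|$ enters only to guarantee that the prefixes under discussion actually fit inside $w$ at each stage, so the main obstacle is not any genuine difficulty but rather the bookkeeping needed to keep the length inequalities $(j+1)|u| \leq |w|$ (and hence $j|u| \leq |v|$) consistent throughout the induction. I expect no subtlety beyond this index tracking.
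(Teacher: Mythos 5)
Your induction is correct: the base case is immediate from $w=uv$, and in the inductive step the facts that $v$ is a prefix of $w$ and $|v|=|w|-|u|\geq j|u|$ legitimately let you identify the first $j|u|$ letters of $v$ with $u^j$, giving $u^{j+1}$ as a prefix. Note that the paper does not prove this lemma at all --- it is quoted verbatim from \cite{FRSZ} (their Lemma 3) --- so there is no in-paper argument to compare against; your proof is the standard one, equivalent to the periodicity observation you mention at the outset.
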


\begin{thm}\label{inf powers or patterns}
Let $x$ be an infinite word such that 
$$\overline{d}(D(x,k,\sigma)) \geq \left( 1 + \left\lfloor \frac{1}{\sigma}{k \choose 2}\right\rfloor\right)^{-1}$$
for some $k, \sigma \in \N$.  For every $\ell$, there is a word $u$ with $|u| \leq (k-1)\left\lfloor\frac{1}{\sigma}{k \choose 2}\right\rfloor$ such that $u^\ell$ is a factor of $x$.
\end{thm}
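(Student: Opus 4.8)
Write $C = {k \choose 2}$ and $B = \lfloor C/\sigma \rfloor$, so that the hypothesis reads $\overline{d}(D(x,k,\sigma)) \ge (1+B)^{-1}$ and the target period bound is $(k-1)B$. The plan is to use the density hypothesis to locate, arbitrarily far out in $x$, a short window of block-lengths lying in $D(x,k,\sigma)$ that is forced --- by a pigeonhole against the number $C$ of position-pairs --- to repeat an equal-block relation, and then to turn that repetition into a factor of small period via Lemma \ref{border implies power}. I would first record the bookkeeping device: for a block-length $p$, declare the position-pair $(i,j)$ with $1 \le i < j \le k$ to be \emph{active} if blocks $i$ and $j$ of $x[1..kp]$ coincide. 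A prefix counted by $D(x,k,\sigma)$ then has at most $\sigma$ active pairs, each drawn from the $C$ possibilities, and the constant $B$ is precisely the point at which this $\sigma$-per-length budget, accumulated along consecutive block-lengths, is forced to exceed $C$, so that some position-pair $(i,j)$ must be active at two distinct block-lengths of the window.

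To produce such a window I would exploit $\overline{d}(D(x,k,\sigma)) \ge (1+B)^{-1}$ directly: passing to a subsequence of lengths $N$ realizing the $\limsup$, the fraction of $m \le N$ lying in $D(x,k,\sigma)$ is at least $(1+B)^{-1}$ for arbitrarily large $N$, so a short-gap argument produces two block-lengths $m' < m''$ in $D(x,k,\sigma)$ with $m'' - m'$ of order $B$. Here I would lean on the downward-closedness of $D(x,k,\sigma)$ in the dominance order, noted after its definition, to normalise which block-patterns occur and to make the count of active pairs clean, so that a common active position-pair $(i,j)$ for $m'$ and $m''$ is genuinely forced rather than merely likely. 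Since the witnesses $N$ can be taken arbitrarily large, I may also arrange $m'$ to be as large as needed later.

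The decisive step is to extract a short period. Activeness of $(i,j)$ at block-length $p$ means precisely that $x$ has period $(j-i)p$ on the window $[(i-1)p+1,\, jp]$; applying this at $p = m'$ and $p = m''$ gives two periods $(j-i)m'$ and $(j-i)m''$ on a common subwindow, whose difference is $(j-i)(m''-m')$. A shift argument --- which I would phrase through iterated use of Lemma \ref{border implies power}, reading each period as a border of the relevant prefix --- then yields that $x$ has period $q := (j-i)(m''-m')$ on a subwindow of length comparable to $jm'$, hence as large as I wish once $m'$ is large. Taking the associated word $u$ of length $q$ and a prefix $w$ of this periodic subwindow with $|w| \ge \ell q$, Lemma \ref{border implies power} delivers $u^\ell$ as a factor of $x$ with $|u| = q \le (k-1)\lfloor \frac{1}{\sigma}{k \choose 2}\rfloor$, as required. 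The main obstacle is exactly this consolidation: converting two nearby periods on an overlapping window into a single clean period equal to their difference, using only the border lemma, while keeping the bookkeeping tight enough that the gap $m''-m'$ and the block-distance $j-i \le k-1$ combine to at most $(k-1)B$ and the threshold $(1+B)^{-1}$ --- rather than something larger --- suffices.
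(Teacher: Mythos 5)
Your overall strategy is the paper's: use the density hypothesis to locate a short window of block-lengths in $D(x,k,\sigma)$, pigeonhole over the ${k \choose 2}$ position-pairs to find one pair $(i,j)$ whose blocks coincide at two different block-lengths, and convert that double coincidence into a factor of period $(j-i)(s-r)\le (k-1)\left\lfloor\frac{1}{\sigma}{k\choose 2}\right\rfloor$ via Lemma \ref{border implements power}\ref{border implies power}. But there is a genuine gap at the pigeonhole step. You extract only \emph{two} block-lengths $m'<m''$ in $D(x,k,\sigma)$ at distance of order $B=\lfloor{k\choose 2}/\sigma\rfloor$, and assert that a common active pair is ``genuinely forced.'' It is not: two lengths contribute only $2\sigma$ pair-instances, which exceeds ${k\choose 2}$ only when $B\le 1$. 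The paper instead finds $B+1$ \emph{consecutive} integers $\{m,\dots,m+B\}$ all lying in $D(x,k,\sigma)$ (with $m>\ell(k-1)B$); since each such length contributes at least $\sigma$ equal-block pairs, the total $(B+1)\sigma>{k\choose 2}$ forces some pair $(i,j)$ to be active at two lengths $r<s$ in the window, and only then is $s-r\le B$ guaranteed. Relatedly, your bookkeeping has the inequality backwards: you say a prefix counted by $D$ has ``at most $\sigma$ active pairs'' and that this budget is ``forced to exceed $C$'' --- an upper bound per length cannot force a lower bound on the total. The count that drives the pigeonhole is that each $m\in D$ contributes \emph{at least} $\sigma$ pairs (the $\le$ in the paper's displayed definition of $D$ is evidently a typo for $\ge$, as its use in Corollary \ref{inf powers or k lambda} confirms); the appeal to downward-closedness in the dominance order does not repair this.

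The final step is also on shakier ground than you suggest. Equality of blocks $i$ and $j$ at block-length $p$ does \emph{not} give a period $(j-i)p$ on the whole window $[(i-1)p+1,\,jp]$: it says nothing about the intermediate blocks, so the ``two periods on a common subwindow'' you want to consolidate are not actually available, and no Fine--Wilf-style consolidation is needed or used in the paper. The paper's route is direct: set $w=x[is+1..(i+1)r]$ and $v=x[js+1..(j+1)r]$; the coincidence at block-length $s$ shows $v$ is a prefix of $w$, the coincidence at block-length $r$ shows $v$ is a suffix of $w$, so $v$ is a border of $w$ with $|w|-|v|=(j-i)(s-r)\le(k-1)B$, while $|v|\ge m-(k-1)B\ge(\ell-1)|u|$ because $m$ was chosen large; a single application of Lemma \ref{border implies power} then yields $u^\ell$. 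You correctly identified the border lemma and the difference $(j-i)(m''-m')$ as the eventual period, but as written both the extraction of the window and the consolidation step would need to be replaced by the arguments above.
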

\begin{proof}
Fix such a $k$ and $\sigma$.  Fix an arbitrary $\ell \geq 1$, and let $\beta = \left\lfloor\frac{1}{\sigma}{k \choose 2}\right\rfloor$.  By the condition on the upper density of $D(x,k,\sigma)$, there exists some integer $m > \ell(k-1)\beta$ such that $\{m,m+1,\dots,m + \beta\} \subset D(x,k,\sigma)$.  Following \cite{FRSZ}, for every $j \in \{0,\dots,k-1\}$ and $r \in \{m,\dots, m + \beta\}$, set
$$U_{j,r} = x[jr + 1..(j+1)r].$$
That is, $U_{0,r}\cdots U_{k-1,r} = x[1..kr]$.  Since $\{m,m+1,\dots,m + \beta\} \subset D(x,k,\sigma)$, we are guaranteed at least $(\beta+1)\sigma > {k \choose 2}$ triples $(i,j,r)$ such that $i < j$ and $U_{i,r} = U_{j,r}$.  By the Pigeonhole Principle, there exist $i,j,r,s$ such that $0 \leq i < j \leq k-1$, $m \leq r < s \leq \beta + 1$, $U_{i,r} = U_{j,r}$, and $U_{i,s} = U_{j,s}$.  

Setting $w = x[is + 1..(i+1)r]$ and $v = x[js + 1..(j+1)r]$, we have
$$|v| = (j + 1)r - js < (i + 1)r - is = |w|,$$
so $v$ is a border of $w$.  Writing $w = uv$, we have
$$1 \leq |u| = |w| - |v| = (j-i)(s-r) \leq (k-1)\beta$$
while 
$$|v| = r - j(s-r) \geq m - (k-1)\beta \geq \ell (k-1)\beta \geq (\ell - 1)|u|.$$
Hence, $|w| = |u| + |v| \geq \ell|u|$.  By Lemma \ref{border implies power}, $u^\ell$ is a factor of $x$.
\end{proof}

Theorem \ref{inf powers or patterns} can be applied to the special case of $(k,\lambda)$-anti-powers.  The definition of $(k,\lambda)$-anti-powers suggests the following generalization of $\AP(x,k)$, the set of integers $m$ such that the prefix of $x$ of length $km$ is a $k$-anti-power.

\begin{defn}
Let $\AP(x,k,\lambda)$ be the set of $m \in \N$ such that the prefix of the word $x$ of length $km$ is a $(k,\lambda)$-anti-power.
\end{defn}
Note that $\AP(x,k,1) = \AP(x,k)$.

\begin{cor}\label{inf powers or k lambda}
Let $x$ be an infinite word such that 
$$\underline{d}(\AP(x,k,\lambda)) < \left(1 + \left\lfloor \frac{k^2 - k}{\lambda^2 + \lambda} \right\rfloor \right)^{-1}$$
for some $k, \lambda \in \N$.  For every $\ell$, there is a word $u$ with $|u| \leq (k-1)\left\lfloor \frac{k^2 - k}{\lambda^2 + \lambda} \right\rfloor$ such that $u^\ell$ is a factor of $x$.
\end{cor}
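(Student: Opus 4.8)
The plan is to deduce this from Theorem~\ref{inf powers or patterns} by choosing $\sigma$ so that the two density thresholds and the two length bounds coincide, and then verifying the theorem's hypothesis through a set inclusion relating the complement of $\AP(x,k,\lambda)$ to an appropriate $D(x,k,\sigma)$. Concretely, I would take $\sigma = \binom{\lambda+1}{2} = \tfrac{\lambda^2+\lambda}{2}$. With this choice $\tfrac{1}{\sigma}\binom{k}{2} = \tfrac{k^2-k}{\lambda^2+\lambda}$ exactly, so that $\beta := \left\lfloor\tfrac{1}{\sigma}\binom{k}{2}\right\rfloor = \left\lfloor\tfrac{k^2-k}{\lambda^2+\lambda}\right\rfloor$. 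This already aligns the length bound $(k-1)\beta$ produced by the theorem with the bound $(k-1)\left\lfloor\tfrac{k^2-k}{\lambda^2+\lambda}\right\rfloor$ claimed in the corollary, so only the density hypothesis remains to be checked. Note that it is the $+1$ in $\binom{\lambda+1}{2}$ (not $\binom{\lambda}{2}$) that produces the denominator $\lambda^2+\lambda$.

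The key step is the inclusion
$$\N\setminus\AP(x,k,\lambda)\ \subseteq\ D\!\left(x,k,\tbinom{\lambda+1}{2}\right).$$
To see this, fix $m\notin\AP(x,k,\lambda)$ and write the prefix of length $km$ as $w_1\cdots w_k$ with $|w_1|=\cdots=|w_k|=m$. Failing to be a $(k,\lambda)$-anti-power means some block value occurs at least $\lambda+1$ times, i.e.\ the partition of $\{1,\dots,k\}$ by equality of blocks has a part of size $s\geq\lambda+1$. Since the number of coincident pairs of blocks in a $(\mu_1,\dots,\mu_k)$-block-pattern is exactly $\sum_s\mu_s\binom{s}{2}$, a single part of size $s\geq\lambda+1$ already forces $\sum_s\mu_s\binom{s}{2}\geq\binom{\lambda+1}{2}$, so by the definition of $D$ the index $m$ lies in $D(x,k,\binom{\lambda+1}{2})$. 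This is where the correct threshold $\binom{\lambda+1}{2}$ enters, and it is the step most prone to an off-by-one error.

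Finally I would translate densities. Using the identity $\overline{d}(\N\setminus S)=1-\underline{d}(S)$ together with the inclusion above,
$$\overline{d}\!\left(D(x,k,\tbinom{\lambda+1}{2})\right)\ \geq\ \overline{d}\!\left(\N\setminus\AP(x,k,\lambda)\right)\ =\ 1-\underline{d}(\AP(x,k,\lambda)).$$
The hypothesis $\underline{d}(\AP(x,k,\lambda))<(1+\beta)^{-1}$ then yields $\overline{d}(D(x,k,\binom{\lambda+1}{2}))>1-(1+\beta)^{-1}=\tfrac{\beta}{1+\beta}$. Every nonvacuous instance of the corollary has $\lambda\leq k-1$ (for $\lambda\geq k$ the condition $|\{i:w_i=w_j\}|\leq\lambda$ holds automatically, forcing $\underline{d}(\AP(x,k,\lambda))=1$ and contradicting the hypothesis), whence $\lambda^2+\lambda\leq k^2-k$ and therefore $\beta\geq1$. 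Consequently $\tfrac{\beta}{1+\beta}\geq(1+\beta)^{-1}$, so $\overline{d}(D(x,k,\binom{\lambda+1}{2}))\geq(1+\beta)^{-1}$ and the hypothesis of Theorem~\ref{inf powers or patterns} is met with $\sigma=\binom{\lambda+1}{2}$. Applying that theorem yields a word $u$ with $|u|\leq(k-1)\beta=(k-1)\left\lfloor\tfrac{k^2-k}{\lambda^2+\lambda}\right\rfloor$ and $u^\ell$ a factor of $x$, as claimed. The only genuine obstacle is the density bookkeeping of this last paragraph—particularly confirming $\beta\geq1$ so that $\tfrac{\beta}{1+\beta}\geq\tfrac{1}{1+\beta}$—since the threshold-matching and set-inclusion steps are routine once $\sigma$ is chosen correctly.
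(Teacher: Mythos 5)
Your proposal is correct and follows essentially the same route as the paper: choose $\sigma=\binom{\lambda+1}{2}$, use the inclusion $\N\setminus\AP(x,k,\lambda)\subseteq D\left(x,k,\binom{\lambda+1}{2}\right)$, and pass from $\underline{d}(\AP(x,k,\lambda))<(1+\beta)^{-1}$ to $\overline{d}\left(D\left(x,k,\binom{\lambda+1}{2}\right)\right)\geq\frac{\beta}{1+\beta}\geq\frac{1}{1+\beta}$ before invoking Theorem~\ref{inf powers or patterns}. The only difference is that you spell out two points the paper leaves implicit, namely the verification of the inclusion via a part of size at least $\lambda+1$ and the check that $\beta\geq1$ (equivalently $\lambda\leq k-1$ in any nonvacuous instance), which the paper's inequality $\frac{\beta}{1+\beta}\geq\frac{1}{1+\beta}$ silently requires.
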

\begin{proof}
Fix $k$ and $\lambda$ as above.  Note that $\N \cut \AP(x,k,\lambda) \subseteq D\left(x,k,{\lambda + 1 \choose 2}\right)$.  Hence, 
$$\underline{d}(\AP(x,k,\lambda)) < \left(1 + \left\lfloor \frac{k^2 - k}{\lambda^2 + \lambda} \right\rfloor \right)^{-1}$$
implies
$$\overline{d}\left(D\left(x,k,{\lambda + 1 \choose 2}\right)\right) \geq \frac{\left\lfloor \frac{k^2 - k}{\lambda^2 + \lambda} \right\rfloor}{1 + \left\lfloor \frac{k^2 - k}{\lambda^2 + \lambda} \right\rfloor} \geq \frac{1}{1 + \left\lfloor{\lambda + 1 \choose 2}^{-1} {k\choose 2}\right\rfloor}.$$
This shows that $x$ satisfies the conditions of Theorem \ref{inf powers or patterns} for the same $k$ and $\sigma = {\lambda + 1 \choose 2}$.
\end{proof}

In the case that our alphabet is finite, there are finitely many factors of length at most $(k-1)\left\lfloor \frac{k^2 - k}{\lambda^2 + \lambda} \right\rfloor$.  Thus, the Pigeonhole Principle allows us to choose a word $u$ that works for every $\ell$ in Theorem \ref{inf powers or k lambda}.

\begin{cor}\label{inf powers fin alphabet}
Let $x$ be an infinite word on a finite alphabet such that 
$$\underline{d}(\AP(x,k,\lambda)) < \left(1 + \left\lfloor \frac{k^2 - k}{\lambda^2 + \lambda} \right\rfloor \right)^{-1}$$
for some $k, \lambda \in \N$.  There is a word $u$ with $|u| \leq (k-1)\left\lfloor \frac{k^2 - k}{\lambda^2 + \lambda} \right\rfloor$ such that $u^\ell$ is a factor of $x$ for every $\ell > 0$. In particular, $x$ is not $\omega$-power-free.
\end{cor}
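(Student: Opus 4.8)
The plan is to bootstrap \Cref{inf powers or k lambda}, which for each fixed $\ell$ produces a possibly $\ell$-dependent word, into a single word $u$ that works simultaneously for all $\ell$, exploiting the finiteness of the alphabet. Set $B = (k-1)\left\lfloor \frac{k^2 - k}{\lambda^2 + \lambda}\right\rfloor$. First I would apply \Cref{inf powers or k lambda} once for each $\ell \in \N$: since $x$ satisfies the stated density hypothesis, for every $\ell$ there exists a word $u_\ell$ with $1 \leq |u_\ell| \leq B$ such that $u_\ell^\ell$ is a factor of $x$. Here nonemptiness is essential and is guaranteed by the bound $1 \leq |u|$ established in the proof of \Cref{inf powers or patterns}, which underlies \Cref{inf powers or k lambda}.

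Next I would invoke the Pigeonhole Principle. If the alphabet has size $\alpha$, then the number of words of length at most $B$ is finite (at most $\sum_{i=1}^{B}\alpha^i$). Since there are infinitely many indices $\ell$ but only finitely many candidate words, some fixed word $u$ with $1 \leq |u| \leq B$ must coincide with $u_\ell$ for infinitely many $\ell$. For this $u$, the factor $u^\ell$ therefore occurs in $x$ for arbitrarily large values of $\ell$.

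To upgrade ``arbitrarily large $\ell$'' to ``every $\ell > 0$,'' I would use monotonicity of power-factors: if $u^{\ell'}$ is a factor of $x$ and $\ell \leq \ell'$, then $u^\ell$ is a factor of $u^{\ell'}$ and hence of $x$. Because $u^\ell$ occurs for infinitely many $\ell$, the set of exponents for which $u^\ell$ is a factor is unbounded in $\N$, and by monotonicity it must be all of $\N$. This produces a single word $u$ with $|u| \leq B$ such that $u^\ell$ is a factor of $x$ for every $\ell > 0$, as required. The concluding clause is then immediate: $u$ is a finite nonempty factor of $x$ for which no power $u^\ell$ fails to occur, which directly contradicts the defining condition of $\omega$-power-freeness.

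I do not anticipate a genuine obstacle here, since the conceptual work has already been done in \Cref{inf powers or k lambda}. The only two points requiring care are (i) ensuring $u$ is nonempty, so that the violation of $\omega$-power-freeness is meaningful, and (ii) correctly passing from ``infinitely many exponents'' to ``all exponents'' via monotonicity, rather than incorrectly assuming that the chosen $u$ literally equals every $u_\ell$.
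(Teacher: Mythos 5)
Your proposal is correct and takes essentially the same route as the paper, which likewise applies Corollary~\ref{inf powers or k lambda} for each $\ell$ and uses the Pigeonhole Principle over the finitely many words of length at most $(k-1)\left\lfloor \frac{k^2-k}{\lambda^2+\lambda}\right\rfloor$ to extract a single $u$. The paper leaves the passage from ``infinitely many exponents'' to ``all exponents'' implicit; your explicit appeal to monotonicity of power-factors fills that in correctly.
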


There is a $\lambda = 1$ analogue to Corollary \ref{inf powers or k lambda} in \cite{FRSZ} (their Theorem 4), which claims under the same density condition that $x$ is not $\omega$-power-free.  Though the condition that the alphabet is finite is not explicitly stated, their result is false for infinite alphabets.  In fact, there exist $\omega$-power-free words which avoid $k$-anti-power prefixes for some fixed $k \in \N$. These words also show that Theorem 6 of \cite{FRSZ}, which states that $\omega$-power-free words have anti-powers of every order beginning at each index, is false when infinite alphabets are allowed.  Theorem \ref{counterexample} provides a counterexample to Theorems 4 and 6 in \cite{FRSZ} when infinite alphabets are permitted.

\begin{thm}\label{counterexample}
There exists an $\omega$-power-free word $x$ on an infinite alphabet such that $\AP(x,k)$ is empty for some $k \in \N$.
\end{thm}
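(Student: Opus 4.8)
The plan is to realize $x$ as the \emph{canonical word} of an equivalence relation $\sim$ on the position set $\N$: assign a distinct letter to each $\sim$-class, so that $x[a]=x[b]$ holds exactly when $a\sim b$. Two length-$m$ blocks $x[(i-1)m+1..im]$ and $x[(j-1)m+1..jm]$ then coincide as words precisely when their positions are pointwise $\sim$-equivalent, and this encoding decouples the two requirements. To force $\AP(x,k)=\emptyset$ it suffices to guarantee, for every $m\ge 1$, that \emph{some} pair of the $k$ blocks of the length-$km$ prefix is pointwise identified by $\sim$; that prefix then has two equal blocks and is not a $k$-anti-power. To obtain $\omega$-power-freeness it suffices to control the arithmetic periodicities of $\sim$: an occurrence of $u^\ell$ at position $p$ with $|u|=d$ is exactly a run of consecutive integers $n$ with $n\sim n+d$ of length $(\ell-1)d$, so it is enough to produce a constant $C$ with the property that every such run has length at most $Cd$. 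This bounds every power by $C+1$, which is much stronger than $\omega$-power-freeness. Finally, the alphabet is automatically infinite: were it finite, Corollary \ref{inf powers fin alphabet} (taking $\lambda=1$ and using $\underline{d}(\AP(x,k))=0$) would contradict $\omega$-power-freeness.

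I would build $\sim$ greedily, one scale at a time. Having handled scales $1,\dots,m-1$, at scale $m$ I select one pair of blocks $1\le i<j\le k$ and adjoin the identifications $x[(i-1)m+t]\sim x[(j-1)m+t]$ for $t=1,\dots,m$, then pass to the generated equivalence; this keeps the condition $m'\in\N\setminus\AP(x,k)$ intact for all $m'\le m$ since closure only adds equalities. A single step is harmless in isolation, producing a run of length $m$ at period $(j-i)m\ge m$, hence of power at most $2$. The heart of the matter, and the main obstacle, is that identifications from different scales interact: by the theorem of Fine and Wilf, two runs of periods $d$ and $d'$ overlapping in a window of length at least $d+d'-\gcd(d,d')$ force a run of period $\gcd(d,d')$, and iterating such coincidences can, for careless choices, cascade into a long low-period run — that is, an unbounded power, or in the worst case a collapse of $\sim$ to a single class, i.e. a constant word. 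Thus everything reduces to showing that at each scale at least one of the $\binom{k}{2}$ candidate pairs can be adjoined without creating any run of power exceeding $C$.

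For this crux I would exploit that $k$ may be taken large, so that the abundance of candidate pairs outstrips the constraints. One classifies a pair as \emph{forbidden} at scale $m$ if adjoining it would lengthen or create a run of power $>C$; each forbidden pair is charged to a pre-existing ``dangerous'' coincidence, namely a short window already carrying two periods whose gcd the new period $(j-i)m$ would reinforce, and one tries to choose the pair so that $(j-i)m$ shares only a small gcd with the previously used periods and so that the affected block avoids the current long runs. The delicate point, which I expect to be the real work, is the bookkeeping that tracks how runs of the many periods propagate through the transitive closure as successive scales are processed, and that bounds the number of forbidden pairs below $\binom{k}{2}$ uniformly in $m$; granting this, the greedy process never stalls. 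The limiting relation $\sim$ then yields a canonical word $x$ with every power bounded by $C+1$ (hence $\omega$-power-free), over an infinite alphabet, and with $\AP(x,k)=\emptyset$ by construction, which is the assertion of the theorem.
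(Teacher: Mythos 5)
Your proposal has a genuine gap at exactly the point you flag as ``the real work.'' The entire argument rests on the claim that at every scale $m$ at least one of the ${k \choose 2}$ candidate pairs can be adjoined to $\sim$ without creating a run of power exceeding $C$, and you do not prove this: you describe a charging scheme one would \emph{try} to carry out and then say ``granting this, the greedy process never stalls.'' But the Fine--Wilf cascade you correctly identify is precisely the danger, and it is not clear that the number of forbidden pairs stays below ${k \choose 2}$ uniformly in $m$ --- identifications made at scale $m$ persist into all later scales after transitive closure, so the set of ``pre-existing dangerous coincidences'' grows without bound, and no bookkeeping is exhibited that controls it. There is also a secondary unverified point: you assert the resulting alphabet is infinite by appeal to Corollary \ref{inf powers fin alphabet}, which is fine \emph{if} $\omega$-power-freeness has been established, but that is the very thing the missing lemma was supposed to deliver. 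As written, the proposal is a reduction of the theorem to an unproven combinatorial claim, not a proof.

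For comparison, the paper's construction avoids all of this machinery. It takes the single explicit word $y=\prod_{i=1}^{\infty}(a_i)^{2^i}$, i.e.\ runs of distinct letters whose lengths double. Each letter occurs only finitely often, so no factor $u$ can have arbitrarily high powers occurring in $y$; this gives $\omega$-power-freeness for free, with no period analysis at all. And for $k\geq 17$ and any block length $m$, there is an $i$ with $4m\leq 2^i$ and $2^{i+1}<km$, so the run of $a_i$ lies entirely inside the prefix of length $km$ and is long enough to contain two consecutive aligned blocks, both equal to $a_i^m$; hence $\AP(y,k)=\emptyset$. The lesson is that on an infinite alphabet you do not need to engineer the equivalence relation delicately --- you can make the ``repeated block'' a constant block $a_i^m$ whose letter never recurs elsewhere, which decouples the two requirements far more cheaply than your canonical-word encoding does.
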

\begin{proof}
Let $y = \prod_{i = 1}^\infty (a_i)^{2^i}$.  Since there are finitely many appearances of each letter $a_i$, $y$ is clearly $\omega$-power-free.  Note that if $2^{i + 1} - 2^i = 2^i \geq 4m$ for some block length $m$ and some $i$ satisfying $2^{i + 1} < km$, then two blocks of the prefix of length $km$ must equal $a_i^m$.  Hence, $m \not\in \AP(x,k)$.  For $k \geq 17$, such an $i$ always exists.   $\AP(x,k)$ is empty for $k \geq 17$, despite $x$ being $\omega$-power-free.  
\end{proof}

We return to a modified version of the proof of Theorem \ref{inf powers or patterns} in order to find bounds on the length of words avoiding $k$-powers and $k$-anti-powers.

\begin{thm}\label{finite block bounds}
For all integers $\ell > 1, k > 1, \sigma \geq 1$ there exists $N'_\alpha(\ell,k,\sigma)$ such that every word of length $N'_\alpha(\ell,k,\sigma)$ on $[\alpha]$ contains an $\ell$-power or $(\mu_1,\dots,\mu_k)$-block-pattern satisfying $\sum_{s = 1}^k \mu_s{s \choose 2} \leq \sigma$.  Moreover,
$$k\left(k - \left\lfloor \frac{1}{2}(\sqrt{8\sigma + 1}  + 1)\right\rfloor\right) \leq N'_\alpha(k,k,\lambda) \leq \left\lfloor\frac{1}{\sigma}{k \choose 2}\right\rfloor(k^3 - k^2 + k).$$
\end{thm}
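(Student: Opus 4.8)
The plan is to prove the two halves independently: the existence of $N'_\alpha(\ell,k,\sigma)$ together with the stated upper bound is a finite adaptation of the proof of Theorem \ref{inf powers or patterns}, while the lower bound requires exhibiting an explicit extremal word. Throughout I would write $\beta = \left\lfloor\frac1\sigma\binom k2\right\rfloor$, and set $\tau = \left\lfloor\frac12(\sqrt{8\sigma+1}+1)\right\rfloor$, noting that $\tau$ is exactly the largest integer with $\binom\tau2\le\sigma$, so that $\binom\tau2\le\sigma<\binom{\tau+1}2$. The quantity $\tau+1$ is the smallest multiplicity forcing strictly more than $\sigma$ coincident pairs, which is what drives the lower bound.

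\textbf{Upper bound and existence.} I would mirror the proof of Theorem \ref{inf powers or patterns}, but applied to the prefixes of a single finite word. Given a word $x$ of length $k(m+\beta)$ with $m=\ell(k-1)\beta$ (so $m=k(k-1)\beta$ when $\ell=k$), examine the prefixes $x[1..kr]$ for $r\in\{m,\dots,m+\beta\}$, all of which exist since $k(m+\beta)\le|x|$. If some prefix is a $(\mu_1,\dots,\mu_k)$-block-pattern with $\sum_s\mu_s\binom s2\le\sigma$, we are done; otherwise each of these $\beta+1$ prefixes has strictly more than $\sigma$ pairs of equal blocks. Writing $U_{j,r}=x[jr+1..(j+1)r]$, the number of triples $(i,j,r)$ with $i<j$ and $U_{i,r}=U_{j,r}$ then exceeds $(\beta+1)\sigma>\binom k2$, so by the Pigeonhole Principle some pair $(i,j)$ recurs at two scales $r<s$. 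The border computation from Theorem \ref{inf powers or patterns} produces $u$ with $|u|=(j-i)(s-r)\le(k-1)\beta$ and $|v|=r-j(s-r)\ge m-(k-1)\beta\ge(\ell-1)|u|$, so $|uv|\ge\ell|u|$ and Lemma \ref{border implies power} yields an $\ell$-power. This establishes finiteness of $N'_\alpha(\ell,k,\sigma)$ for every $\ell$, and for $\ell=k$ gives the length $k(m+\beta)=\beta(k^3-k^2+k)$, the claimed upper bound.

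\textbf{Lower bound.} I would exhibit a binary word of length $k(k-\tau)-1$ avoiding both a $k$-power and every block-pattern with at most $\sigma$ pairs, namely the prefix $w=(0^{k-1}1)^{k-\tau-1}0^{k-1}$ of $(0^{k-1}1)^\omega$, whose $1$'s sit precisely at the positions that are multiples of $k$. To rule out $k$-powers, note that a factor $v^k$ has $|v|=q$ with $kq\le|w|<k(k-\tau)$, hence $1\le q\le k-1$; the $1$'s inside such a factor all lie on one residue class modulo $k$, so $q$-periodicity would force a $1$ at a position differing by $q$, impossible since $q\not\equiv0\pmod k$ (the remaining case, $v$ a single letter, is excluded because the longest constant run in $w$ has length $k-1$). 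To rule out $\le\sigma$-block-patterns, observe that a factor split into $k$ blocks of length $m$ satisfies $km\le|w|<k(k-\tau)$, forcing $m\le k-\tau-1$; since the $1$'s are spaced $k>m$ apart, such a factor contains at most $m$ of them, so at least $k-m\ge\tau+1$ of its blocks equal $0^m$. These identical blocks alone contribute at least $\binom{\tau+1}2>\sigma$ pairs, so the factor cannot be a block-pattern with at most $\sigma$ pairs. This yields $N'_\alpha(k,k,\sigma)\ge k(k-\tau)$.

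\textbf{Main obstacle.} The upper half is essentially bookkeeping once Theorem \ref{inf powers or patterns} and Lemma \ref{border implies power} are available, so the genuine difficulty lies in the lower bound: producing a single word that simultaneously defeats all $k$-powers and forces a surplus of coincident blocks at \emph{every} admissible block length $m$ at once. The construction $(0^{k-1}1)^{\cdots}0^{k-1}$ is designed to thread this needle, since the sparse, evenly spaced $1$'s both break every short period (killing $k$-powers through the residue argument) and guarantee at least $\tau+1$ identical $0^m$-blocks in each split. The delicate point to verify carefully is that this surplus beats $\sigma$ \emph{strictly}, which is exactly why $\tau$ must be taken as the largest integer with $\binom\tau2\le\sigma$, so that $\binom{\tau+1}2>\sigma$; getting this threshold off by one would either break the pattern-avoidance or lose the claimed length $k(k-\tau)-1$.
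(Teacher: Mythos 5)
Your proof is correct and follows essentially the same route as the paper: the upper bound is the identical finite pigeonhole-plus-border argument with the same parameters (word length $\beta(k^3-k^2+k)$, blocks of length $r\in\{k(k-1)\beta,\dots,k(k-1)\beta+\beta\}$), and your lower-bound word $(0^{k-1}1)^{k-\tau-1}0^{k-1}$ is literally the paper's word $0^{k-1}(10^{k-1})^{k-\tau-1}$. Your observation that $\tau$ is the largest integer with ${\tau \choose 2}\le\sigma$ is a slightly cleaner justification of the threshold than the paper's direct algebraic estimate, and your residue argument for $k$-power avoidance replaces the paper's simpler count of occurrences of the letter $1$, but these are cosmetic differences.
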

\begin{proof}
As in \cite{FRSZ}, the upper bound follows from the proof of the infinite case in Theorem \ref{inf powers or patterns}.  Let $\beta = \left\lfloor\frac{1}{\sigma}{k \choose 2}\right\rfloor$.  Let $x$ be any word of length $\beta(k^3 - k^2 + k)$.  For each $r \in \{(k^2 -k)\beta,\dots,(k^2 - k + 1)\beta\}$, consider the first $k$ consecutive blocks of length $r$ in $x$, denoted by $U_{0,r},U_{1,r},\dots, U_{k-1,r}$.  If $x$ does not contain any element of $D(x,k,\sigma)$, then there exist $i,j,r,s$ such that $0 \leq i < j \leq k-1$, $m \leq r < s \leq \beta + 1$, $U_{i,r} = U_{j,r}$ and $U_{i,s} = U_{j,s}$.  Setting $w = x[is + 1..(i+1)r]$ and $v = x[js + 1..(j+1)r]$, we have that $v$ is a border of $w$.  Writing $w = uv$, we have $|u| \leq (k-1)\beta$ and 
$$|w| = |u| + |v| \geq |u| + r - j(s-r) \geq |u| + (k-1)^2\beta \geq k|u|.$$
By Lemma \ref{border implies power}, we get that $u^k$ is a factor of $x$, i.e., $x$ contains a $k$-power.  The length of $x$ is chosen to accommodate $k$ blocks of size at most $(k^2 - k + 1)\beta$.

The lower bound is proven via a construction; we will show that the word 
$$x = 0^{k-1}(10^{k-1})^{ k - \left\lfloor \frac{1}{2}(\sqrt{8\sigma + 1}  + 1)\right\rfloor - 1}$$
avoids $k$-powers and $(\mu_1,\dots,\mu_k)$-block-patterns with $\sum_{s = 1}^k \mu_s{s \choose 2} \leq \sigma$.   Since $\sigma \geq 1$, we have
$$k - \left\lfloor \frac{1}{2}(\sqrt{8\sigma + 1} + 1)\right\rfloor - 1  \leq k - 1.$$
If $u^k$ were a factor of $x$, either $u$ would contain the letter $1$, contradicting the fact that that $x$ has at most $k-1$ copies of the letter $1$, or $u = 0^m$ for some $m \geq 1$, contradicting the fact that $x$ has no factor equal to $0^k$.  Hence, $x$ avoids $k$-powers.  We can see that for every factor $v$ of length $km$, at least $\left\lfloor \frac{1}{2}(\sqrt{8\sigma + 1} + 1)\right\rfloor + 1$ blocks of $v$ are equal to $0^m$.   $v$ is a $(\mu_1,\dots,\mu_k)$-block-pattern with 
$$\sum_{s = 1}^k \mu_s{s \choose 2} \geq {\left\lfloor \frac{1}{2}(\sqrt{8\sigma + 1} + 1)\right\rfloor + 1  \choose 2} > \frac{1}{8}(\sqrt{8\sigma + 1} + 1)(\sqrt{8\sigma + 1}) \geq \sigma.$$
\end{proof}

We can specialize Theorem \ref{finite block bounds} to the case of $(k,\lambda)$-anti-powers.

\begin{cor}\label{lambda bounds}
For all integers $\ell > 1, k > 1, \lambda \geq 1$, there exists $N_\alpha(\ell,k,\lambda)$ such that every word of length $N_\alpha(\ell,k,\lambda)$ on $[\alpha]$ contains an $\ell$-power or $(k,\lambda)$-anti-power.  Moreover,
$$k(k-\lambda) \leq N_\alpha(k,k,\lambda) \leq \left\lfloor \frac{k^2 - k}{\lambda^2 + \lambda} \right\rfloor(k^3 - k^2 + k).$$
\end{cor}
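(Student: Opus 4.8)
The plan is to apply Theorem \ref{finite block bounds} with $\sigma = {\lambda + 1 \choose 2}$ and to supply a separate, sharper construction for the lower bound. First, observe the arithmetic identity $\frac{1}{\sigma}{k \choose 2} = \frac{k^2 - k}{\lambda^2 + \lambda}$, so that $\beta := \left\lfloor \frac{1}{\sigma}{k \choose 2}\right\rfloor = \left\lfloor \frac{k^2-k}{\lambda^2+\lambda}\right\rfloor$ and the upper bound in the corollary is exactly the upper bound of Theorem \ref{finite block bounds} for this $\sigma$. The crucial translation between the two settings is the following: a length-$km$ factor is a $(k,\lambda)$-anti-power precisely when the partition of its $k$ blocks by equality has every part of size at most $\lambda$, and hence a factor that fails to be a $(k,\lambda)$-anti-power has some part of size $\geq \lambda + 1$, contributing at least ${\lambda+1 \choose 2} = \sigma$ pairs of equal blocks. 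It is this contrapositive direction --- \emph{not an anti-power implies many equal pairs} --- that I would use, rather than attempting to argue that few pairs forces the anti-power condition (which is false, since a single part of size $\lambda+1$ already realizes $\sigma$ pairs).

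For the upper bound and the existence of $N_\alpha(\ell,k,\lambda)$, I would re-run the upper-bound argument of Theorem \ref{finite block bounds} verbatim, reading ``$(k,\lambda)$-anti-power'' in place of ``block-pattern with $\sum_s \mu_s {s \choose 2} \leq \sigma$.'' Concretely, fix a word $x$ of length $\beta(k^3 - k^2 + k)$ and suppose it contains no $(k,\lambda)$-anti-power factor; for each block length $r$ in a window of $\beta + 1$ consecutive values, the $k$ consecutive blocks $U_{0,r},\dots,U_{k-1,r}$ then yield at least $\sigma$ pairs $(i,j)$ with $U_{i,r} = U_{j,r}$, so over the whole window more than ${k \choose 2}$ such pairs occur. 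Pigeonhole produces a pair index $(i,j)$ and two block lengths $r < s$ with $U_{i,r} = U_{j,r}$ and $U_{i,s} = U_{j,s}$; the border estimate together with Lemma \ref{border implies power} then exhibits a $k$-power, a contradiction. The same argument with exponent $\ell$ in place of $k$ establishes the existence of $N_\alpha(\ell,k,\lambda)$ for general $\ell$.

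For the lower bound I would \emph{not} specialize the construction of Theorem \ref{finite block bounds}, since that yields only $k(k-\lambda-1)$; instead I would use the word $x = 0^{k-1}(10^{k-1})^{\,k-\lambda-1}$, which has length $(k-1) + k(k-\lambda-1) = k(k-\lambda) - 1$ and contains exactly $k - \lambda - 1$ occurrences of the letter $1$. Since a $k$-power $u^k$ would force either $k$ copies of the letter $1$ (impossible, as $k - \lambda - 1 < k$) or the factor $0^k$ (impossible, as runs of $0$ have length $k-1$), the word avoids $k$-powers. For any factor of length $km$, at most $k - \lambda - 1$ of its $k$ length-$m$ blocks can contain a $1$, so at least $\lambda + 1$ of them equal $0^m$; thus one part of the equality partition has size $\geq \lambda + 1$ and the factor is not a $(k,\lambda)$-anti-power. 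Hence $N_\alpha(k,k,\lambda) \geq k(k-\lambda)$.

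The step I expect to require the most care is precisely this gain in the lower bound: violating the $(k,\lambda)$-anti-power condition requires only $\lambda + 1$ equal blocks, a strictly weaker demand than forcing $\sum_s \mu_s {s \choose 2} > \sigma$, which needs $\lambda + 2$ equal blocks. Recognizing that this lets one raise the exponent in the construction from $k - \lambda - 2$ to $k - \lambda - 1$ --- and checking that the resulting word still avoids $k$-powers --- is what separates the sharp constant $k(k-\lambda)$ from the weaker value obtained by a direct appeal to Theorem \ref{finite block bounds}.
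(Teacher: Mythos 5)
Your proposal is correct, and for the upper bound it follows the paper's route: specialize Theorem \ref{finite block bounds} to $\sigma = {\lambda+1 \choose 2}$, via the contrapositive observation that a length-$km$ factor failing to be a $(k,\lambda)$-anti-power has an equality class of size at least $\lambda+1$ and hence at least $\sigma$ coincident pairs of blocks, which is all the pigeonhole count $(\beta+1)\sigma > {k \choose 2}$ requires. (Your remark that the reverse implication fails --- a single class of size $\lambda+1$ realizes exactly $\sigma$ pairs without being a $(k,\lambda)$-anti-power --- is a correct sharpening of the paper's one-line reduction, which states the implication with a non-strict inequality.) Where you genuinely depart from the paper is the lower bound, and your extra care there is not optional: the paper's entire proof is the sentence that applying Theorem \ref{finite block bounds} with $\sigma = {\lambda+1 \choose 2}$ ``yields the corresponding bounds,'' but since $8{\lambda+1 \choose 2}+1 = (2\lambda+1)^2$, the lower bound of that theorem specializes to $k(k-\lambda-1)$, not the claimed $k(k-\lambda)$; the construction there keeps $\lambda+2$ blocks equal to $0^m$ because it must violate the condition $\sum_{s} \mu_s {s \choose 2} \leq \sigma$, whereas violating the $(k,\lambda)$-anti-power condition needs only $\lambda+1$ equal blocks. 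Your word $0^{k-1}(10^{k-1})^{k-\lambda-1}$, of length $k(k-\lambda)-1$ with exactly $k-\lambda-1$ ones, exploits precisely this slack: it avoids $k$-powers for the same two reasons as the paper's construction, and every length-$km$ factor has at least $\lambda+1$ blocks equal to $0^m$. So your argument does not merely take a different route; it supplies a step that the paper's own proof omits, since the published two-line reduction as written only establishes $k(k-\lambda-1) \leq N_\alpha(k,k,\lambda)$.
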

\begin{proof}
Suppose a word avoids $(k,\lambda)$-anti-powers.  Then it avoids $(\mu_1,\dots,\mu_k)$-block-patterns with $\sum_{s = 1}^k \mu_s{s \choose 2} \leq {\lambda + 1 \choose 2}$.  Applying Theorem \ref{finite block bounds} with $\sigma = {\lambda + 1 \choose 2}$ yields the corresponding bounds.
\end{proof}

In particular, this improves upon the upper bound for $N_\alpha(k,k)$ (in their notation, $N(k,k)$) in \cite{FRSZ}.

\begin{cor}\label{improved upper bound}
For all $k > 1$,
$$N_\alpha(k,k) \leq (k^3 - k^2 + k){k \choose 2}.$$
\end{cor}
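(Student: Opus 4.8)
The final statement to prove is Corollary \ref{improved upper bound}, which asserts that $N_\alpha(k,k) \leq (k^3 - k^2 + k)\binom{k}{2}$ for all $k > 1$. The plan is to derive this directly as a specialization of Corollary \ref{lambda bounds}, which already establishes the more general bound $N_\alpha(k,k,\lambda) \leq \left\lfloor \frac{k^2 - k}{\lambda^2 + \lambda} \right\rfloor(k^3 - k^2 + k)$ for words avoiding $\ell$-powers or $(k,\lambda)$-anti-powers.

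First I would observe that the quantity $N_\alpha(k,k)$ measures the minimum length forcing a $k$-power or a $k$-anti-power. Since a $k$-anti-power is precisely a $(k,1)$-anti-power (as noted immediately after the definition of $(k,\lambda)$-anti-powers, where $\lambda = 1$ recovers the $k$-anti-power condition), we have $N_\alpha(k,k) = N_\alpha(k,k,1)$. Thus the corollary follows by setting $\lambda = 1$ in Corollary \ref{lambda bounds}.

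The only remaining step is the routine arithmetic simplification of the upper bound at $\lambda = 1$. Substituting $\lambda = 1$ into $\left\lfloor \frac{k^2 - k}{\lambda^2 + \lambda} \right\rfloor$ gives $\left\lfloor \frac{k^2 - k}{2} \right\rfloor = \frac{k^2 - k}{2} = \binom{k}{2}$, where the floor is exact because $k^2 - k = k(k-1)$ is always even. Multiplying by the factor $(k^3 - k^2 + k)$ yields exactly $(k^3 - k^2 + k)\binom{k}{2}$, which is the claimed bound.

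There is essentially no obstacle here, as this is a direct corollary: the entire combinatorial content was already carried out in the proof of Theorem \ref{finite block bounds} and transferred to $(k,\lambda)$-anti-powers in Corollary \ref{lambda bounds}. The sole point requiring a moment's care is verifying that the floor function evaluates exactly at $\lambda = 1$, which is immediate from the parity of $k(k-1)$. Hence the proof amounts to invoking Corollary \ref{lambda bounds} with $\lambda = 1$ and simplifying.
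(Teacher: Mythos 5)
Your proof is correct and matches the paper's (implicit) argument exactly: the corollary is stated as an immediate specialization of Corollary \ref{lambda bounds} at $\lambda = 1$, using that a $k$-anti-power is a $(k,1)$-anti-power and that $\left\lfloor\frac{k^2-k}{2}\right\rfloor = \binom{k}{2}$ since $k(k-1)$ is even. Nothing is missing.
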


\section{Avoiding Anti-Powers}\label{generalize}
  This section is devoted to generalizing the results of Fici et al. \cite{FRSZ} on infinite words avoiding $k$-anti-powers to the case of $(k,\lambda)$-anti-powers. Many of these generalizations can be achieved using proofs similar to those in \cite{FRSZ}. We also provide a condensed proof of the fact that the Sturmian words contain anti-powers of every order beginning at every index.
 
  We begin with a straightforward lemma.
\begin{lem}\label{step down k lambda}
Suppose $k > \lambda > j > 1$. If a word avoids $(k,\lambda)$-anti-powers, then it avoids $(k-j,\lambda - j)$-anti-powers. 
\end{lem}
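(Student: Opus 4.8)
The plan is to prove the contrapositive: I will show that if an infinite word $x$ contains a $(k-j,\lambda-j)$-anti-power as a factor, then it contains a $(k,\lambda)$-anti-power as a factor. First note that the hypotheses are consistent: $\lambda > j$ gives $\lambda - j \geq 1$, so $(k-j,\lambda-j)$ is a legitimate (and, since $k > \lambda$, nontrivial) anti-power parameter. Suppose then that $v = x[p+1\,..\,p+(k-j)m]$ is a $(k-j,\lambda-j)$-anti-power with blocks $v_1,\dots,v_{k-j}$ each of length $m$, so that every block value occurs at most $\lambda - j$ times among the $v_i$.

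The idea is simply to lengthen this factor by $j$ more blocks. Since $x$ is infinite (to the right), the factor $w = x[p+1\,..\,p+km]$ exists; cut it into blocks $w_1 \cdots w_k$ of length $m$, so that $w_i = v_i$ for $1 \leq i \leq k-j$ while $w_{k-j+1},\dots,w_k$ are $j$ additional blocks whose values I do not control. I claim $w$ is a $(k,\lambda)$-anti-power no matter what those extra blocks are. The key step is a multiplicity count. For a block value $t$, write $\mu_v(t)$ and $\mu_w(t)$ for the number of blocks equal to $t$ in $v$ and in $w$. Every occurrence in $v$ persists in $w$, so $\mu_w(t) \geq \mu_v(t)$, and since exactly $j$ blocks are appended, $\sum_t(\mu_w(t) - \mu_v(t)) = j$; in particular no single value gains more than $j$, i.e. $\mu_w(t) \leq \mu_v(t) + j$. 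If $t$ occurs in $v$ then $\mu_v(t) \leq \lambda - j$, so $\mu_w(t) \leq \lambda$; if $t$ does not occur in $v$ then $\mu_w(t) \leq j < \lambda$. Either way every value occurs at most $\lambda$ times in $w$, which is precisely the defining condition $|\{i : w_i = w_{j'}\}| \leq \lambda$ for each fixed $j'$. Hence $w$ is a $(k,\lambda)$-anti-power, contradicting the hypothesis that $x$ avoids them.

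The one delicate point, and the reason the argument is really about the ambient setting of this section, is the availability of the $j$ extra blocks: extending $v$ on the right is exactly where infiniteness is used. For finite words the statement can genuinely fail at the boundary (for instance a single-block $m=1$ $(k-j,\lambda-j)$-anti-power of length $k-j<k$ contains no factor of $k$ blocks at all, hence vacuously avoids $(k,\lambda)$-anti-powers), so I read ``word'' here as an infinite word, after which the extension goes through with no further work. I expect no serious obstacle beyond this: the whole content is the elementary multiplicity bound $\mu_w(t) \leq \mu_v(t) + j$, and in fact the argument never uses $j > 1$ — only $\lambda > j \geq 1$ is needed — nor does it require Lemma \ref{border implies power} or any density input.
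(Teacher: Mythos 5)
Your proof is correct and takes essentially the same approach as the paper: the paper reduces to the case $j=1$ and extends the $(k-1,\lambda-1)$-anti-power by one block, noting each multiplicity $|\{i: w_i = w_{j'}\}|$ grows by at most $1$, while you append all $j$ blocks at once and bound the growth by $j$ — the same extend-and-count-multiplicities idea. Your explicit remarks that the word must be infinite (or long enough) to permit the extension and that $j>1$ is never actually needed are both accurate and are left implicit in the paper.
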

\begin{proof}
It is enough to show that if a word avoids $(k,\lambda)$-anti-powers, then it avoids $(k-1,\lambda - 1)$-anti-powers.  Suppose that a word $x$ contains a $(k-1,\lambda -1)$-anti-power $w$ of length $km$.  If we extend to the right by $m$ letters, we obtain a  $(k,\lambda)$-anti-power, since we increase the number of equal blocks, $|\{i : w_i = w_j\}|$ for any $j$, by at most $1$. 
\end{proof}
\begin{defn}
We call an infinite word {\it constant} if it is of the form $a^\omega$ for some $a \in \A$.
\end{defn}
In order to classify the words avoiding $(k,k-2)$-anti-powers, we will use two results of Fici, Restivo, Silva, and Zamboni.
\begin{lem}\label{three binary}\emph{(\cite{FRSZ}, Lemma 9)}
Let $x$ be an infinite word.  If $x$ avoids $3$-anti-powers, then $x$ is a binary word.
\end{lem}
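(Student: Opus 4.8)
The plan is to prove the contrapositive: assuming the infinite word $x$ uses at least three distinct letters, I will exhibit a $3$-anti-power factor, contradicting the hypothesis. Throughout I use only the case of blocks of length $1$ of the hypothesis together with one carefully placed longer window.

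First I would run the \emph{$m=1$ reduction}: since a $3$-anti-power with block length $1$ is exactly a factor $x[i]x[i+1]x[i+2]$ whose three letters are pairwise distinct, avoidance of $3$-anti-powers means no three consecutive positions of $x$ carry three distinct letters. Next I would \emph{locate a new letter sitting just after a repeated one}. Let $p$ be the least index for which $x[1..p]$ contains three distinct letters; then $x[1..p-1]$ uses exactly two letters, say $a,b$, while $c:=x[p]\notin\{a,b\}$, and $p\geq 3$. Applying the $m=1$ reduction to the triple $(x[p-2],x[p-1],x[p])$ and using $c\notin\{a,b\}$ forces $x[p-2]=x[p-1]=:z\in\{a,b\}$. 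Writing $z'$ for the other letter of $\{a,b\}$, the letter $z'$ must occur in $x[1..p-1]$ but not at $p-2$ or $p-1$, so its last occurrence $r$ satisfies $r\leq p-3$, and by maximality $x[r+1..p-1]=z^{\,p-1-r}$.

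Then I would \emph{build the anti-power}. Set $d=p-r\geq 3$ and choose an integer $m$ with $(d+1)/3\leq m\leq d/2$, which is possible precisely because $d\geq 3$. Consider the window $x[r..r+3m-1]$ with blocks $B_1=x[r..r+m-1]$, $B_2=x[r+m..r+2m-1]$, $B_3=x[r+2m..r+3m-1]$. The bound $m\leq d/2$ places $B_1$ and $B_2$ entirely inside the run $z'z^{\,p-1-r}$, giving $B_1=z'z^{m-1}$ and $B_2=z^m$, while the two inequalities $2m\leq d\leq 3m-1$ are exactly what puts position $p$ inside $B_3$, so $B_3$ contains $c$. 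Hence $B_1\neq B_2$ (first letters $z'\neq z$), and $B_3$ differs from both since it alone contains $c$; thus $x[r..r+3m-1]$ is a $3$-anti-power, the desired contradiction.

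The step I expect to be the main obstacle is the \emph{placement} of the window, not any computation. One cannot simply use the first three blocks of $x$ or align the first occurrences of $a,b,c$: when those occurrences are unevenly spaced — say the first $b$ appears immediately but the first $c$ only much later — no window anchored at position $1$ cuts the three ``types'' into three equal blocks, and the naive approach fails. The resolution is to anchor the window at $r$, the \emph{last} occurrence of $z'$ before the pair $zz$ that the $m=1$ reduction plants directly in front of $c$; this simultaneously guarantees the clean $z'z^{m-1}$ versus $z^m$ distinction and leaves exactly enough room for $c$ to land in the final block. The only quantitative point to verify is that the admissible range for $m$ is nonempty, i.e.\ $\lceil (d+1)/3\rceil\leq\lfloor d/2\rfloor$ whenever $d\geq 3$, which is immediate.
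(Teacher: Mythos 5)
The paper does not actually prove this lemma; it imports it verbatim as Lemma~9 of \cite{FRSZ}, so your argument can only be judged on its own terms. Your overall strategy (locate the first occurrence $p$ of a third letter $c$, use the block-length-$1$ case to force $x[p-2]=x[p-1]=z$, anchor a window at the last prior occurrence $r$ of the other letter $z'$, and choose a block length $m$ so that $B_1=z'z^{m-1}$, $B_2=z^m$, and $B_3$ captures $c$) is sound in spirit, and the verification that $B_1,B_2,B_3$ are pairwise distinct is correct \emph{whenever an admissible $m$ exists}.

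The gap is precisely the step you dismiss as ``immediate'': the integer range $\lceil (d+1)/3\rceil\leq m\leq\lfloor d/2\rfloor$ is \emph{empty} when $d=3$, since $\lceil 4/3\rceil=2>1=\lfloor 3/2\rfloor$. (Your construction genuinely needs both inequalities: $2m\leq d$ to keep $B_2$ inside the run of $z$'s, and $d\leq 3m-1$ to push position $p$ into $B_3$, and for $d=3$ these force $m=1.5$.) The case $d=3$ is not vacuous: it occurs exactly when $x[p-3]=z'$, e.g.\ for any word beginning $abbc\cdots$, where $p=4$, $r=1$, $d=3$. So as written the proof produces no $3$-anti-power in such words. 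The case can be repaired, but it requires a different window rather than a different $m$: if $r\geq 2$, the factor $x[r-1..r+4]$ with blocks $x[r-1]z'$, $zz$, $c\,x[r+4]$ is a $3$-anti-power (the first two differ in their second letter, and only the third contains $c$); if $r=1$, so that $x$ begins $z'zzc$, no leftward shift is available and one must propagate the avoidance constraints further to the right (e.g.\ block length $2$ at position $1$ forces $x[5]x[6]=zc$, block length $2$ at position $2$ forces $x[7]=z$, and then $z'zz\,|\,czc\,|\,zx[8]x[9]$ is a $3$-anti-power since the three blocks begin with $z'$, $c$, $z$ respectively). Until the $d=3$ case is handled, the proof is incomplete.
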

\begin{prop}\label{three factor}\emph{(\cite{FRSZ}, Proposition 10)}
Let $x$ be an infinite word.  If $x$ avoids $3$-anti-powers, then it cannot contain a factor of the form $10^n1$ or $01^n0$ with $n > 1$.
\end{prop}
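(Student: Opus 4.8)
The plan is to assume, for contradiction, that $x$ avoids $3$-anti-powers yet contains a factor of the form $10^n1$ with $n > 1$ (the case $01^n0$ being symmetric under swapping the two letters). By Lemma \ref{three binary}, $x$ is a binary word, so we may take the alphabet to be $\{0,1\}$ and freely reason about its factors. The key idea is that a long factor like $10^n1$ forces the appearance of three pairwise-distinct blocks of equal length somewhere, which is exactly a $3$-anti-power. So the whole proof is a matter of choosing the block length cleverly and exhibiting three distinct blocks.

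First I would record the basic consequence of avoiding $3$-anti-powers at block length $m$: in any factor $v = v_1 v_2 v_3$ with $|v_1|=|v_2|=|v_3|=m$, the three blocks $v_1,v_2,v_3$ cannot be pairwise distinct, so at least two of them coincide. Now take the factor $10^n1$ with $n>1$. I would consider the factor of length $3$ obtained by setting the block length $m=1$ at a suitable position; for instance, reading three consecutive letters inside the run, such as $1\,0\,0$ near the left end, gives blocks $1$, $0$, $0$, which is consistent, so block length $1$ alone is not enough. The real leverage comes from using a block length that straddles the boundary between the leading $1$ and the run of $0$'s. I would choose the block length $m$ so that the three consecutive length-$m$ blocks starting at the initial $1$ read as (i) a block containing the leading $1$ followed by $0$'s, (ii) a block of all $0$'s (possible because $n>1$ provides enough $0$'s), and (iii) a block that reaches the trailing $1$. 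These three blocks are forced to be pairwise distinct: the all-$0$ block differs from the other two since they each contain a $1$, and the first and third blocks differ because the $1$ sits in different positions within them. That produces a $3$-anti-power, contradicting the hypothesis.

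The main obstacle, and the step requiring genuine care, is arranging the indices so that all three blocks fit inside the available factor and so that the positions of the $1$'s within blocks (i) and (iii) really are different. Concretely, I would exploit $n>1$ to guarantee at least one wholly-internal block of $0$'s, and I would pick the alignment so that the leading $1$ lands at position $1$ of block (i) while the trailing $1$ lands at a position other than $1$ of block (iii); since the run of $0$'s has length $n>1$, there is enough slack in the choice of $m$ to force this offset. Once the three blocks are verified pairwise distinct, Definition of $3$-anti-power is met and we are done. The argument for $01^n0$ is identical after exchanging the roles of $0$ and $1$.
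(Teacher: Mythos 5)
The paper does not actually prove this proposition; it imports it verbatim from Fici--Restivo--Silva--Zamboni, so your proposal has to stand on its own. It does not: there is a genuine gap in the central step. Your strategy is to locate the $3$-anti-power inside (or aligned with) the factor $10^n1$ itself, by choosing $m$ so that the three blocks starting at the leading $1$ are $10^{m-1}$, $0^m$, and a block containing the trailing $1$ at a relative position different from $1$. Those constraints amount to $\lceil (n+2)/3\rceil \le m \le \lfloor n/2\rfloor$, and no such integer $m$ exists when $n\in\{2,3,5\}$. The failure is not an artifact of your particular alignment: the finite words $1001$, $10001$, and $1000001$ contain no $3$-anti-power at all as factors (any three consecutive length-$1$ blocks over $\{0,1\}$ repeat a letter by pigeonhole, and three longer blocks either do not fit or force two equal blocks of zeros), so \emph{no} choice of block length and starting position inside the factor can produce the contradiction. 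Your appeal to ``enough slack'' from $n>1$ is exactly where the argument breaks.

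For these small $n$ the statement genuinely requires the letters of $x$ surrounding the factor, which is why the result is stated for infinite words. For instance, if $1001$ occurs at positions $i,\dots,i+3$, the three length-$2$ blocks $x[i-1]x[i]$, $x[i+1]x[i+2]=00$, $x[i+3]x[i+4]$ read $b1$, $00$, $1c$; avoiding a $3$-anti-power forces $b1=1c$, hence $b=c=1$, and one must then continue the case analysis on the resulting factor $110011$ (and similarly for $10001$, $1000001$). Any correct proof has to carry out this kind of propagation outside the factor, or reduce to it; your proposal never leaves the factor, so it proves the proposition only for $n=4$ and $n\ge 6$ and leaves the remaining cases open.
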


\begin{thm}\text{ }\\\vspace{-0.5cm}
\begin{enumerate}
    \item For $k > 1$, the infinite words avoiding $(k,k-1)$-anti-powers are precisely the constant words.
    \item For $k > 2$, infinite words avoiding $(k,k-2)$-anti-powers are the words that differ from a constant word in at most one position.
    \item For $k > 3$, there exist infinite aperiodic words avoiding $(k,k-3)$-anti-powers.
\end{enumerate}
\end{thm}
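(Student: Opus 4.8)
The plan is to handle the three parts separately, in each case translating the anti-power condition into a statement about how the non-majority (``defect'') letters of a binary word distribute among the $k$ blocks of a factor. Throughout I use that a factor $w = w_1\cdots w_k$ of length $km$ is \emph{not} a $(k,\lambda)$-anti-power precisely when some block value occurs at least $\lambda+1$ times among $w_1,\dots,w_k$; thus avoiding $(k,\lambda)$-anti-powers means that in every length-$km$ factor some value is shared by at least $\lambda+1$ of the $k$ blocks. Part (1) is then immediate in both directions at scale $m=1$: a constant word splits every factor into $k$ identical blocks (so the majority value occurs $k$ times, never a $(k,k-1)$-anti-power), while a non-constant word has an adjacent pair $x[i]\neq x[i+1]$, making $x[i..i+k-1]$ a non-constant length-$k$ factor in which every value occurs at most $k-1$ times. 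The easy direction of (2) is the same: if $x$ differs from $a^\omega$ in one position, that single defect meets at most one block of any factor, so at least $k-1$ blocks equal $a^m$ and the value $a^m$ occurs at least $k-1>k-2$ times.

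The content of (2) is the converse. I would first apply Lemma~\ref{step down k lambda} repeatedly to descend from $(k,k-2)$-anti-power avoidance to $(3,1)$-anti-power avoidance, i.e.\ avoidance of ordinary $3$-anti-powers. Lemma~\ref{three binary} then makes $x$ binary, say on $\{0,1\}$, and Proposition~\ref{three factor} forbids $01^n0$ and $10^n1$ for $n>1$, forcing every maximal run except possibly an initial one to have length $1$. Hence $x$ is either (A) eventually alternating with both letters occurring infinitely often, or (B) eventually constant $c^\omega$ with the other letter $d$ occurring only finitely often. Supposing $x$ does not differ from a constant in at most one position — equivalently, both letters occur at least twice — I would produce a $(k,k-2)$-anti-power in each case. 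In case (A), scale $m=1$ suffices: a length-$k$ alternating factor has each letter occurring at most $\lceil k/2\rceil \le k-2$ times (for $k\ge 4$). In case (B), let $p<q$ be the last two occurrences of $d$ and take $m = q-p$; then $x[p..p+km-1]$ has first two blocks both equal to $dc^{m-1}$ and remaining $k-2$ blocks equal to $c^m$, so the largest multiplicity is $\max(2,k-2)=k-2$, giving the required $(k,k-2)$-anti-power.

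For (3) I would give an explicit construction. Over $\{0,1\}$, place $1$ exactly at a super-lacunary set of positions $q_1<q_2<\cdots$ with $q_{i+1}\ge k\,q_i$ (for example $q_i=k^i$) and $0$ elsewhere; since the gaps $q_{i+1}-q_i$ increase strictly, $x$ is aperiodic. The crucial claim is that for every $m$ and every length-$km$ factor at most two blocks contain a $1$. Indeed, if $q_t$ is the largest occurrence of $1$ in a window $[a+1,a+km]$, then $q_{t-1}\le q_t/k \le a/k + m \le a+m$, so every occurrence of $1$ in the window other than $q_t$ lies in the first block $(a,a+m]$; the spoiled blocks are thus among the first block and the block containing $q_t$. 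Consequently at least $k-2$ blocks equal $0^m$, the value $0^m$ occurs at least $k-2 > k-3$ times, and no factor is a $(k,k-3)$-anti-power.

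The main obstacle is the converse of (2), together with its dependence on $k$. The case analysis yields a $(k,k-2)$-anti-power only once $k\ge 4$: a length-$k$ alternating factor has maximal multiplicity $\lceil k/2\rceil$, which falls to $k-2$ or below exactly when $k\ge 4$. The borderline $k=3$, where $(k,k-2)$-anti-powers coincide with $3$-anti-powers and eventually-alternating words such as $(10)^\omega$ survive, must be treated separately and is where the argument requires the most care; verifying that cases (A) and (B) are genuinely exhaustive under Lemma~\ref{three binary} and Proposition~\ref{three factor} is the other point demanding attention.
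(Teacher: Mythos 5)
Your proposal follows essentially the same route as the paper: part (1) is handled at block length $m=1$; part (2) descends to $3$-anti-power avoidance via Lemma~\ref{step down k lambda} and then invokes Lemma~\ref{three binary} and Proposition~\ref{three factor}; part (3) uses a lacunary binary word and counts how many blocks can meet an occurrence of the rare letter (the paper takes $\gamma_{i+1}\ge (k+1)\gamma_i$, you take $q_{i+1}\ge k\,q_i$; the block-counting is the same idea and your version, showing directly that at least $k-2$ blocks equal $0^m$, is if anything cleaner than the paper's contradiction from position arithmetic). One genuine difference: your case analysis in part (2) is more complete than the paper's. After Proposition~\ref{three factor} forces all internal runs to have length $1$, the paper asserts that a binary word with two occurrences of each letter has a factor $10^a1^b0$ or $01^a0^b1$ and reduces everything to the factor $1010$; this misses eventually constant words such as $110^\omega$, which contain neither pattern yet must still be excluded. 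Your case (B) (last two occurrences $p<q$ of the rare letter, block length $m=q-p$, block multiset $\{dc^{m-1},dc^{m-1},c^m,\dots,c^m\}$) covers precisely these words, and your case (A) recovers the paper's $1010$ argument at scale $m=1$.

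The $k=3$ difficulty you flag is real and is a defect of the statement, not of your argument: $(10)^\omega$ avoids $3$-anti-powers (by $2$-periodicity, the first and third blocks of any factor of length $3m$ coincide) yet differs from every constant word in infinitely many positions, so part (2) is false as stated at $k=3$. The paper's own proof only ever produces a $(4,2)$-anti-power, which contradicts $(k,k-2)$-avoidance only for $k\ge 4$, since Lemma~\ref{step down k lambda} points the wrong way to convert a $(4,2)$-anti-power into a $(3,1)$-anti-power. So your write-up proves exactly what is provable here --- parts (1) and (3) in full, and part (2) for $k\ge 4$ --- and correctly isolates the boundary case rather than papering over it.
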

\begin{proof}
The first claim is trivial; merely note that the avoidance of $(k,k-1)$-anti-powers implies that every factor whose length is a multiple of $k$ is a $k$-power.

For the second claim, let $x$ be a word avoiding $(k,k-2)$-anti-powers.  By Lemma \ref{step down k lambda}, $x$ avoids $3$-anti-powers.  By Lemma \ref{three binary}, $x$ is a binary word.  Suppose, seeking a contradiction, that $x$ has at least $2$ instances of $1$ and at least $2$ instances of $0$, i.e., $x$ differs from a constant word in more than one position.  Then $x$ has a factor of the form $10^a1^b0$ or $01^a0^b1$ for some $a \geq 1, b \geq 1$; without loss of generality assume it is the first.  By Proposition \ref{three factor}, $a = b = 1$.  However, under these conditions, $x$ has a factor of the form $1010$, which is itself a $(4,2)$-anti-power.

For the third claim, we exhibit a family of infinite aperiodic words avoiding $(k,k-3)$-anti-powers.  Let $\{\gamma_i\}_{i = 1}^n$ be an increasing sequence such that $\gamma_{i + 1} \geq (k+1)\gamma_i$ for all $i \in \N$.  Define a word $x$ as follows:
$$x[j] = \begin{cases} 1 & \text{if $j = \gamma_i$ for some $i$;}\\
0 & \text{otherwise.}\end{cases}$$
We will show that $x$ avoids $(k,k-3)$-anti-powers.  Note that if $x[\ell + 1..\ell + n]$ has at least two nonzero entries, then for some $i$ we have
$$\ell + 1 \leq \gamma_i < (k + 1) \gamma_i \leq \gamma_{i + 1} \leq \ell + n.$$
This implies that $n > k\gamma_i \geq k(\ell + 1)$, so $\ell + 1 \leq \frac{n}{k}$.  Suppose, seeking a contradiction, that the $k$ consecutive blocks $x[j + 1..j + m], \dots, x[j + (k-1)m + 1.. j + km]$ form a $(k,k-3)$-anti-power.  At most $k-3$ of these blocks can be $0^m$, so the word $x[j + m + 1.. j + km]$ has at least two nonzero entries.  Thus, $j + m + 1 \leq \frac{(k-1)m}{k}$.  It follows that $j + 1 < 0$, a contradiction.
\end{proof}

\begin{thm}
For all $k \geq 6$, there exist aperiodic recurrent words avoiding $(k, k-5)$-anti-powers.
\end{thm}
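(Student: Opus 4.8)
The plan is to reduce the statement to a purely combinatorial condition on the positions of the non-zero letters and then to exhibit a self-similar binary word satisfying it. Avoiding $(k,k-5)$-anti-powers means that in every factor of length $km$, cut into $k$ blocks of length $m$, some block value occurs at least $(k-5)+1=k-4$ times. It therefore suffices to build a recurrent aperiodic word $x\in\{0,1\}^\N$ with the property that for every $m\ge 1$ and every alignment, at most $4$ of any $k$ consecutive length-$m$ blocks contain the letter $1$; the remaining $\ge k-4$ blocks are then all equal to $0^m$, which prevents the anti-power. (The hypothesis $k\ge 6$ is exactly what makes $k-5\ge 1$ meaningful.)

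For the construction I would take the self-similar word obtained as the limit of $W_0=1$ and $W_{t+1}=W_t\,0^{G_t}\,W_t$, where $G_t=R^tG_0$ with $R$ a large multiple of $k$ (say $R=2k$) and $G_0\ge k$. Since $W_t$ is a prefix of $W_{t+1}$ the limit $x$ exists, every factor of $x$ lies in some $W_t$, and each $W_t$ recurs infinitely often because $W_{t+j}$ contains $2^j$ copies of it; hence $x$ is recurrent. As the maximal run of $0$'s grows like $G_t\to\infty$ while $1$'s keep occurring, $x$ is aperiodic. The gap between consecutive $1$'s is governed by the $2$-adic ``ruler'' pattern: the $n$-th gap equals $G_{v_2(n)}$, so no two consecutive gaps are both minimal.

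The heart of the proof is the block count. Fix $m$ and let $r$ satisfy $G_r<m\le G_{r+1}$, so the $1$'s split into clusters joined by gaps $<m$, each cluster being a copy of $W_{r+1}$, with consecutive clusters separated by gaps $\ge G_{r+1}\ge m$. Two facts must be verified. First, at most two clusters meet a window of length $km$: the inter-cluster gaps again follow a ruler pattern with no two consecutive ones minimal, so any two consecutive of them sum to at least $G_{r+1}+G_{r+2}=G_{r+1}(1+R)>km$, while in the finer range $m<G_{r+1}/k$ a single gap $G_{r+1}$ already exceeds $km$, leaving only one cluster. Second, the diameter of a cluster is $\delta_r=|W_{r+1}|\le G_r\frac{R}{R-2}$, and the choice $R>k+2$ forces $\delta_r<G_{r+1}/k$; thus whenever a window can meet two clusters one has $\delta_r<m$, so each cluster lies in $\le 2$ blocks, whereas in the one-cluster range $\delta_r/m<\frac{R}{R-2}<2$, so the single cluster lies in $\le 3$ blocks. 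Either way at most $4$ blocks are occupied, and the tiny regime $m\le G_0$ (clusters are single $1$'s, at most $2$ per window) is handled identically.

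I expect the main obstacle to be precisely this simultaneous multi-scale bookkeeping: the dangerous scales are the middle ones, where a cluster's diameter is comparable to the block length $m$ (threatening a third occupied block per cluster) at the same time as a window can already contain two clusters. The point of taking the ratio $R$ a large multiple of $k$ is to make the range of scales where clusters are ``fat'' ($\delta_r\gtrsim m$) disjoint from the range where two clusters fit in one window, so that one never incurs both penalties at once. Pinning the constant down to exactly $4$—which is what forces the parameter $k-5$—is the delicate point; note that the branching factor $2$ needed for recurrence is exactly what doubles the bound $2$ of the aperiodic case in part (3) (avoiding $(k,k-3)$-anti-powers) to the bound $4$ obtained here.
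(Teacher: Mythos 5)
Your proposal is correct and follows essentially the same route as the paper: the paper's construction is the sesquipower $w_{n+1}=w_n1^{(k-3)|w_n|}w_n$ (your construction with the roles of $0$ and $1$ swapped and separator lengths $(k-3)|w_n|$ in place of $R^tG_0$), and its counting argument is the same two-scale bound of (at most $2$ clusters per window) times (at most $2$ blocks per cluster), yielding at most $4$ blocks that differ from the constant block. Your case analysis is if anything more explicit than the paper's; the only quibble is that the bound $|W_{r+1}|\le G_r\frac{R}{R-2}$ omits the additive $2^{r+1}$ term, which is absorbed by taking $G_0\ge 2k$ (or by using the cruder bound $|W_{r+1}|\le 2G_r$), and this changes nothing in the argument.
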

\begin{proof}
Let $w$ be the limit of the sequence $w_0 = 0$, $w_{n + 1} = w_n1^{(k-3)|w_n|}w_n$.  Note that each occurrence of $w_n$ except the first is preceded and followed by $1^{(k-3)|w_n|}$.  Let $v = v_1v_2\cdots v_k$ be a factor of $w$, where $|v_i| = \ell > 0$ for all $i \in \{1,\dots,k\}$.  Let $n$ be the largest integer such that 
$$|w_n| = (k-1)^n < 2\ell < (k-1)^{n+1} = |w_{n+1}|.$$
Since $w$ is recurrent, we can assume $v$ appears after the first appearance of $w_n$.  

We claim that at most four blocks of $v$ can intersect an occurrence of $w_n$.  Each occurrence of $w_n$ intersects at most two blocks of $v$ by the condition $2\ell > |w_n|$.  Moreover, any three occurrences of $w_n$ are separated by factors of $1^{(k-3)|w_n|}$ and $1^{(k-3)|w_{n+1}|},$.  As
$$|v| = k\ell < \frac{k}{2}|w_{n + 1}| \leq (k-3)|w_{n+1}|,$$
$v$ can intersect at most $2$ occurrences of $w_n$.  We can conclude that at most four blocks of $v$ are not equal to $1^\ell$.
\end{proof}

We now restrict ourselves to the setting of $k$-anti-powers.  In \cite{FRSZ}, Fici et al. question under what conditions aperiodic recurrent words can avoid $k$-anti-powers.  It is known this is possible for $k \geq 6$ and impossible for $k \leq 3$, but nothing has been shown for $k = 4$ or $5$.  One class of aperiodic recurrent words that we can exclude from this search are the {\it Sturmian words}.  
\begin{defn}
A {\it Sturmian word} is an infinite word $x$ such that for all $n \in \N$, $x$ has exactly $n +1$ distinct factors of length $n$.
\end{defn}
Note that Sturmian words are necessarily binary.  An alternate characterization of the Sturmian words in terms of {\it irrationally mechanical words} was given by Morse and Hedlund \cite{MH} in 1938.

\begin{defn}
The {\it upper mechanical word} $s_{\theta,x}$ and the {\it lower mechanical word} $s'_{\theta,x}$ with angle $\theta$ and initial position $x$ are defined, respectively, by
\begin{align*}
    s_{\theta,x}[n] &= \begin{cases} 1 & \text{ if } \theta (n-1) + x \in [1-\theta, 1) \mod 1\\ 0 & \text{ if } \theta (n-1) + x\in [0,1-\theta) \mod 1\end{cases}\\
     s'_{\theta,x}[n] &= \begin{cases} 1 & \text{ if } \theta (n-1) + x \in (1-\theta, 1) \mod 1\\ 0 & \text{ if } \theta (n-1) + x\in [0,1-\theta] \mod 1\end{cases}
\end{align*}
for some $\theta, x\in \R$.  A word $w$ is called {\it irrationally mechanical} if $w = s_{\theta,x}$ or $w = s'_{\theta,x}$ for some $x \in \R$ and irrational $\theta \in \R$.
\end{defn}

\begin{thm}\emph{(\cite{MH})}
A word is Sturmian if and only if it is irrationally mechanical.
\end{thm}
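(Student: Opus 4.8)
The plan is to prove the two implications separately, coding a mechanical word by an irrational rotation for the forward direction and passing through \emph{balanced} words for the converse.

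\emph{Mechanical implies Sturmian.} Fix an irrationally mechanical word; say it is $s_{\theta,x}$ with $\theta$ irrational (the lower mechanical word $s'_{\theta,x}$ is handled identically). I would work on the circle $\R/\Z$ with the rotation $R_\theta(t) = t + \theta \bmod 1$ and the partition into the two arcs $I_1 = [1-\theta,1)$ and $I_0 = [0,1-\theta)$, so that $s_{\theta,x}[n] = 1$ exactly when $R_\theta^{\,n-1}(x) \in I_1$. The factor of length $n$ beginning at position $m$ is then the itinerary $(\mathbf 1[R_\theta^{\,j}(p)\in I_1])_{j=0}^{n-1}$ of the point $p = R_\theta^{\,m-1}(x)$, so two starting points give the same factor precisely when they lie in the same cell of the refined partition whose cut points are the $R_\theta^{-j}$-preimages of $0$ and of $1-\theta$ for $0 \le j \le n-1$. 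Reducing modulo $1$ one checks these preimages are exactly the $n+1$ distinct points $\{-j\theta \bmod 1 : 0 \le j \le n\}$, which split the circle into $n+1$ arcs, each carrying a distinct length-$n$ factor. Because $\theta$ is irrational the forward orbit $\{x + (m-1)\theta \bmod 1\}_{m \ge 1}$ is dense and meets every arc, so all $n+1$ factors actually occur; hence the word has exactly $n+1$ factors of each length and is Sturmian.

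\emph{Sturmian implies mechanical: reduction to balance.} Now let $w$ be Sturmian. Since $p_w(1) = 2$, $w$ is binary. I would first invoke the Morse--Hedlund aperiodicity criterion---$p_w(n) \le n$ for some $n$ forces eventual periodicity---so that $p_w(n) = n+1$ for all $n$ makes $w$ aperiodic. Next I would show $w$ is \emph{balanced}, i.e. any two factors of equal length have numbers of $1$'s differing by at most $1$. The clean route is through right-special factors: for a binary word $p_w(n+1) - p_w(n)$ equals the number of length-$n$ factors $u$ with both $u0$ and $u1$ factors, which is identically $1$ here, so there is a unique right-special factor of every length. If $w$ were unbalanced, the standard Coven--Hedlund lemma produces a palindrome $u$ with both $0u0$ and $1u1$ factors, which forces a second special factor at some length and contradicts uniqueness. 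Thus $w$ is aperiodic and balanced.

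\emph{Sturmian implies mechanical: building $\theta$ and the intercept.} From balance the number of $1$'s in a window of length $n$ takes at most two consecutive values, so the frequency $\theta = \lim_{n\to\infty} |w[1..n]|_1 / n$ exists and satisfies $\big|\,|w[m+1..m+n]|_1 - \theta n\,\big| \le 1$ uniformly in $m$. Were $\theta$ rational, balance would force $w$ to be eventually periodic (a standard fact), contradicting aperiodicity; hence $\theta$ is irrational. Finally I would recover the intercept: for each $n$ the requirement that $\theta n + \rho \bmod 1$ lie in $I_1$ when $w[n]=1$ and in $I_0$ when $w[n]=0$ confines $\rho$ to an arc of the circle, and the uniform estimate above is exactly what guarantees these arcs have the finite-intersection property. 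By compactness their intersection is nonempty, and any $\rho$ in it gives $w = s_{\theta,\rho}$ (switching to $s'_{\theta,\rho}$ and adjusting $\rho$ on the boundary when the orbit lands on a cut point), completing the converse.

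\emph{Main obstacle.} The forward direction is bookkeeping about the rotation, and the reductions to aperiodicity and balance are standard. The real work is the last step: turning the \emph{approximate} regularity guaranteed by balance into the \emph{exact} coding by a rotation, i.e. proving that the nested arc-constraints on $\rho$ genuinely intersect. This is where balance must be used at full strength (it is equivalent to the three-distance behaviour of the positions of the $1$'s), and verifying the finite-intersection property---together with the correct upper-versus-lower boundary convention when the orbit of $\rho$ meets a cut point---is the delicate heart of the argument.
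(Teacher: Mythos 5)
This statement is the classical Morse--Hedlund theorem, which the paper does not prove: it is quoted with a citation to \cite{MH} and used as a black box to pass between the combinatorial and rotational descriptions of Sturmian words. So there is no internal proof to compare against; I can only assess your outline on its own terms. Your plan is the standard textbook route (rotation coding for one implication; aperiodicity, balance, slope, and intercept recovery for the other), and the forward direction is essentially complete. The one assertion there that deserves a sentence is that the $n+1$ arcs cut out by $\{-j\theta \bmod 1 : 0 \le j \le n\}$ carry \emph{pairwise distinct} itineraries; this is not automatic for non-adjacent arcs and is usually handled by induction on $n$ (passing from $n$ to $n+1$ adds exactly one cut point, which splits exactly one arc into two cells whose itineraries agree except in the last symbol), or by combining the upper bound $p(n) \le n+1$ with the Morse--Hedlund lower bound $p(n) \ge n+1$ for aperiodic words.

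The converse, however, is not yet a proof. You correctly reduce Sturmian to ``aperiodic and balanced'' and extract an irrational slope $\theta$, but the step you label the ``delicate heart''---showing that the arcs $\{\rho : \rho + n\theta \bmod 1 \in I_{w[n]}\}$ have the finite-intersection property, so that a valid intercept exists---is precisely where the entire content of the theorem lives, and you defer it rather than carry it out. The inequality $\bigl|\,|w[m+1..m+n]|_1 - \theta n\,\bigr| \le 1$ by itself does not obviously yield that any finite subfamily of these arcs meets: one must convert balance into the exact statement that the partial-sum sequence $n \mapsto |w[1..n]|_1$ stays within a unit band of a line of slope $\theta$ and that the offsets $|w[1..n]|_1 - n\theta$ have a supremum/infimum realizable as the intercept (this is how Morse--Hedlund and Lothaire actually construct $\rho$, rather than by abstract compactness). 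There is also the boundary issue you flag---choosing between $s_{\theta,\rho}$ and $s'_{\theta,\rho}$ when the orbit of $\rho$ hits a cut point---which must be resolved, not just mentioned. As a roadmap your proposal is sound and matches the classical argument, but as written the backward implication has a genuine gap at its central step.
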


Irrationally mechanical words can be interpreted through the lens of mathematical billiards.  Consider the unit circle centered at the origin, parameterized by $g(t) = (\cos(2\pi t),\sin(2\pi t))$ for $t \in \R$.  Place an (infinitesimal) ball at point $g(x)$ on the circle and shoot it in a straight trajectory toward $g(x + \pi)$.  At each moment the ball "bounces off" the circle, it generates a $0$ if it hits the point $g(x)$ for $x \in [0,1-\theta)$ and a $1$ otherwise.  The sequence generated by the trajectory of such a ball is precisely the word $s_{\theta,x}$.  For example, a trajectory generating the famous Fibonacci word is shown below.

\begin{figure}[h]
    \centering
    \includegraphics[scale=0.1]{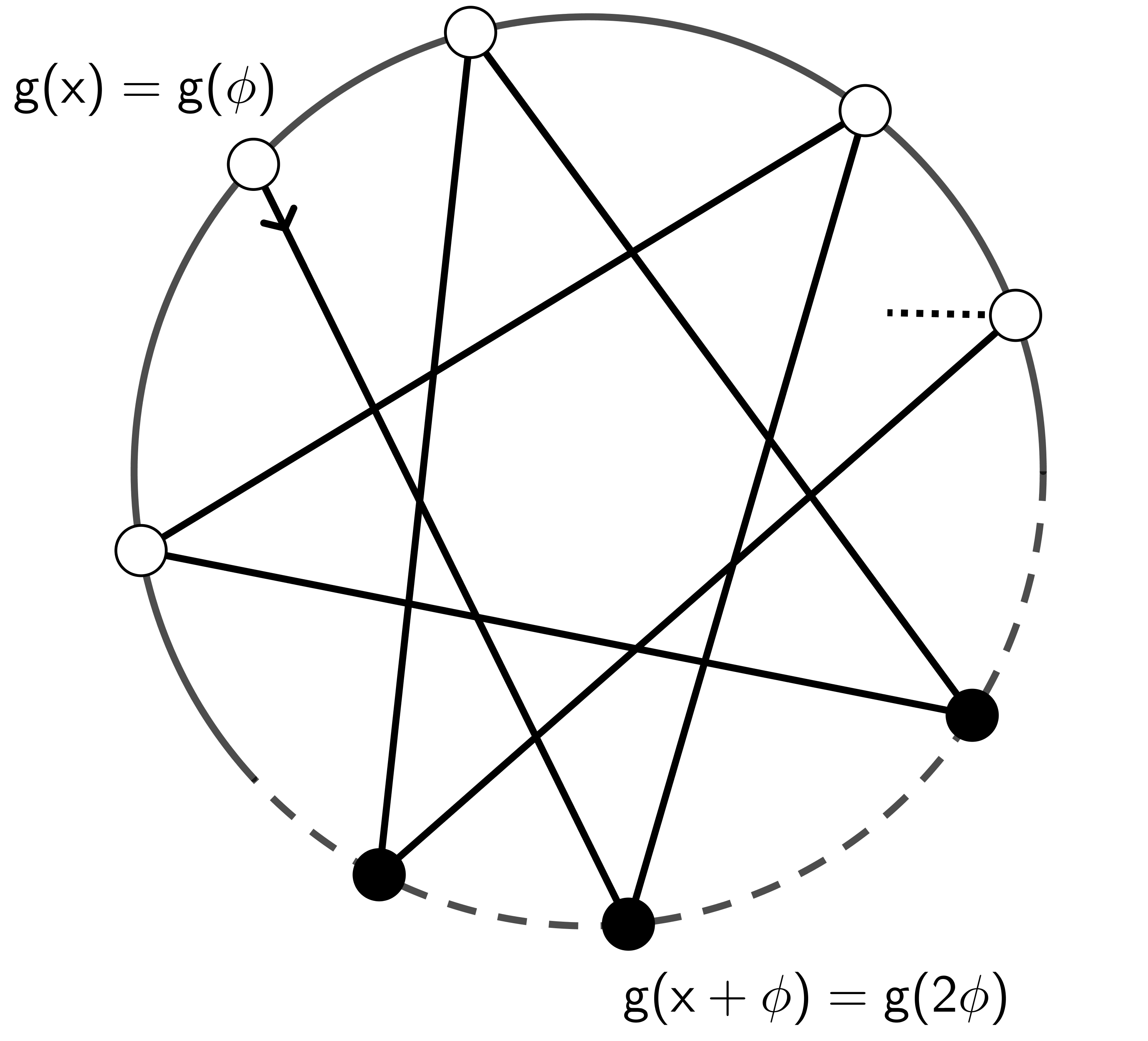}
    \caption{The trajectory associated with the Fibonacci word $s_{\phi,\phi} = 01001010...$, where $\phi$ is the golden ratio $1.6180339...$.  A white point indicates that the letter $0$ is generated, and a black point indicates that the letter $1$ is generated.  The Fibonacci word can also be generated as the limit of the sequence $\{S_n\}_{n = 1}^\infty$, where $S_1 = 0$, $S_2 = 01$, and $S_n = S_{n - 1}S_{n-2}$ for $n \geq 3$.}
    \label{fib}
\end{figure}

The Sturmian words comprise a well-studied class of aperiodic recurrent words.  We will show that for any Sturmian word $x$, $j \geq 0$, and $k \geq 1$, $x$ contains a $k$-anti-power beginning at $x[j]$.  Hence, the Sturmian words cannot avoid $k$-anti-powers for any $k \geq 1$.  It is enough to show that the Sturmian words are $\omega$-power-free by the following theorem from \cite{FRSZ}.

\begin{thm}\emph{(\cite{FRSZ}, Theorem 6)}
Let $x$ be an $\omega$-power-free word on a finite alphabet.  For every $k > 1$ there is an occurrence of a $k$-anti-power starting at every position of $x$.
\end{thm}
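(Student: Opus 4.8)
The plan is to prove the contrapositive: if $x$ is a word on a finite alphabet for which there is some position $j$ and some order $k > 1$ with no $k$-anti-power beginning at $x[j]$, then $x$ is not $\omega$-power-free. Passing to the suffix $y = x_{(j)}$, the hypothesis says exactly that $\AP(y,k)$ is empty, i.e. for \emph{every} block length $m$ the length-$km$ prefix $U_{0,m}U_{1,m}\cdots U_{k-1,m}$, with $U_{a,m} = y[am+1..(a+1)m]$, has at least one pair of equal blocks. I would then extract, for each exponent $\ell$, a short word $u$ with $u^\ell$ a factor of $y$, and finally use finiteness of the alphabet to produce a single $u$ that works for all $\ell$ at once, contradicting $\omega$-power-freeness.

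For the extraction step I would reuse the border-to-power mechanism of Theorem \ref{inf powers or patterns} together with the windowing idea of Theorem \ref{finite block bounds}. Fix $\ell$, set $\beta = {k \choose 2}$, and consider the $\beta + 1$ consecutive block lengths $m, m+1, \dots, m+\beta$ for $m = (\ell + 1)(k-1)\beta$. Each such length forces at least one coinciding pair of block indices, and there are only ${k \choose 2}$ possible index pairs, so by the Pigeonhole Principle two lengths $r < s$ in the window share the same pair $0 \leq i < i' \leq k-1$, i.e. $U_{i,r} = U_{i',r}$ and $U_{i,s} = U_{i',s}$. Exactly as in Theorem \ref{inf powers or patterns}, setting $w = y[is+1..(i+1)r]$ and $v = y[i's+1..(i'+1)r]$ makes $v$ a border of $w$; writing $w = uv$ gives $|u| = (i'-i)(s-r) \leq (k-1)\beta$, while the choice of $m$ forces $|w| \geq m - (k-2)\beta \geq \ell|u|$. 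Lemma \ref{border implies power} then yields that $u^\ell$ is a factor of $y$, hence of $x$, with $|u| \leq (k-1){k \choose 2}$ bounded independently of $\ell$.

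To finish, I would invoke the finite alphabet. Because the bound $|u| \leq (k-1){k \choose 2}$ is uniform in $\ell$, there are only finitely many candidate words $u$, so some fixed $u$ must serve infinitely many values of $\ell$. Since $u^\ell$ being a factor implies $u^{\ell'}$ is a factor for every $\ell' \leq \ell$, this single $u$ satisfies that $u^\ell$ is a factor of $x$ for all $\ell$, contradicting the hypothesis that $x$ is $\omega$-power-free and completing the argument.

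\textbf{The main obstacle} will be controlling the two block lengths $r < s$. A coinciding pair at a single length $m$ only records a period on an interval shorter than the period itself, which is too weak to produce a genuine power; one really needs the \emph{same} index pair to coincide at two different lengths, and moreover needs $s$ close to $r$ so that the border $v$ is long enough to house an $\ell$-th power. Pigeonholing over all of $\N$ would supply such a repeated pair but with no control on the ratio $s/r$, which could be bounded away from $1$. Restricting instead to a window of ${k \choose 2}+1$ consecutive lengths, as in Theorem \ref{finite block bounds}, simultaneously guarantees a repeated index pair and bounds $s - r \leq {k \choose 2}$; this is precisely what makes the inequality $|w| \geq \ell|u|$ attainable once $m$ is taken large.
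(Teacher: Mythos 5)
Your proof is correct and follows the same border-to-power pigeonhole mechanism that underlies the paper's Section \ref{block general} (and the original argument in \cite{FRSZ}); the length bookkeeping ($|u|=(i'-i)(s-r)\leq(k-1)\binom{k}{2}$, $|w|\geq m-(k-2)\beta\geq\ell|u|$) and the final finite-alphabet pigeonhole all check out. Note that the paper states this theorem as a citation without reproving it, and your self-contained extraction step could be replaced by a one-line appeal to Corollary \ref{inf powers fin alphabet} with $\lambda=1$ applied to the suffix $x_{(j)}$, since $\AP(x_{(j)},k)=\emptyset$ trivially has lower density $0$.
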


In fact, for every Sturmian word $x$ there exists an $M \in \N$ such that $x$ avoids $M$ powers.  This follows from the work of Fici, Langiu, Lecroq, Lefebvre, Mignosi, Peltom{\" a}ki, and Prieur-Gaston in \cite{FLLLMPP}.  They prove a stronger but somewhat lengthy result about a generalized notion of powers known as {\it abelian powers}.  We provide a condensed proof of our weaker claim.

\begin{thm}
Let $w$ be a Sturmian word with angle $\theta$.  Let $M =\left \lceil (\min\{\theta, 1-\theta\})^{-1}\right \rceil + 1$.  Then $s_\theta$ avoids $M$-powers.
\end{thm}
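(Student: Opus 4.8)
The plan is to argue by contradiction after two reductions. First, swapping the letters $0$ and $1$ sends the mechanical word $s_{\theta,x}$ to the mechanical word of angle $1-\theta$ and carries $M$-powers to $M$-powers, while $\min\{\theta,1-\theta\}$, and hence $M$, is unchanged; so I may assume $\theta<\tfrac12$. By Morse--Hedlund I take $w=s_{\theta,x}$ and read it dynamically: writing $y_n=\{x+(n-1)\theta\}$, the letter $w[n]$ equals $1$ exactly when $y_n$ lies in the arc $A_1=[1-\theta,1)$ of length $\theta$, and $0$ when $y_n\in A_0=[0,1-\theta)$. Since $\theta<\tfrac12$, the arc $A_1$ is too short to contain two consecutive orbit points $y_n,y_n+\theta$, so $w$ has no factor $11$. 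Now suppose for contradiction that $u^M$ is a factor with $|u|=\ell\ge 1$ and $p:=|u|_1$ ones, and split on the value of $p$.

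The extreme cases pin down the constant $M$. If $p=\ell$ then $u=1^\ell$ and $u^M=1^{M\ell}$ contains $11$ (as $M\ge 3$), contradicting the previous paragraph. If $p=0$ then $u=0^\ell$, so $u^M=0^{M\ell}$ would require $M$ consecutive orbit points $y_n,y_n+\theta,\dots,y_n+(M-1)\theta$ all lying in $A_0$. But their total advance is $(M-1)\theta=\lceil 1/\theta\rceil\,\theta\ge 1$, so these points wind at least once around the circle and cannot all lie in the proper arc $A_0$; hence $0^M$, and a fortiori $0^{M\ell}$, is not a factor. This is precisely where the threshold $M=\lceil 1/\theta\rceil+1$ enters: $M-1$ steps of size $\theta$ just exceed a full turn.

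It remains to exclude $1\le p\le \ell-1$, which I expect to be the main obstacle. Here I would combine two constraints. The balance property of Sturmian words, immediate from the dynamical description above, says every factor of length $L$ has exactly $\lfloor L\theta\rfloor$ or $\lceil L\theta\rceil$ ones; applied to $u$ and to $u^M$ (length $M\ell$, with $Mp$ ones) this forces $|Mp-M\ell\theta|<1$, i.e. $\lVert \ell\theta\rVert<1/M$. Geometrically, fixing a position $i$ of a $1$ in $u$, the repeated occurrences at $i,i+\ell,\dots,i+(M-1)\ell$ place $M$ points of the progression $z_t=z_0+t\,\ell\theta \pmod 1$ inside the short arc $A_1$, whose span then satisfies $(M-1)\lVert\ell\theta\rVert\le\theta$. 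The difficulty is that, unlike the run case, the relevant step is now $\ell\theta$ rather than $\theta$, and $\lVert\ell\theta\rVert$ can be driven arbitrarily small by taking $\ell$ to be the denominator of a convergent of $\theta$; then this packing inequality is comfortably satisfied and no contradiction is visible from counting alone.

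So the crux is to show that even for periods $\ell$ with $\lVert\ell\theta\rVert$ tiny the factor $u^M$ fails to occur — equivalently, to bound how many consecutive points of the $\theta$-orbit can avoid the two boundary neighbourhoods of width $\lVert\ell\theta\rVert$ at $0$ and at $1-\theta$. I would attack this with the three-gap theorem for the rotation by $\theta$, tracking the longest orbit segment that stays clear of those neighbourhoods and matching it against the required length $(M-1)\ell$. Controlling this uniformly in $\ell$ is the delicate point where the full strength of the stated bound must be extracted, and it is here that the argument either closes or requires restricting the partial quotients of $\theta$; resolving this step is, in my view, the heart of the theorem.
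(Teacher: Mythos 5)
Your attempt has a genuine gap, and it is the one you yourself flag: the mixed case $1\le p\le \ell-1$ is never proved. The two cases you do settle ($u=1^{\ell}$ via the absence of $11$ when $\theta<\tfrac12$, and $u=0^{\ell}$ via the winding estimate $(M-1)\theta=\lceil\theta^{-1}\rceil\,\theta\ge 1$) are correct but are the degenerate ones; a proof that ends with ``the heart of the theorem'' unresolved is not a proof. For comparison, the paper does not case-split on the content of $u$ at all: it fixes a single position inside $u$, so that the $M$ orbit points $\{x+(r+tm)\theta\}$, $0\le t\le M-1$, all lie in one letter interval, partitions $[0,1)$ into $\lceil\theta^{-1}\rceil$ (resp.\ $\lceil(1-\theta)^{-1}\rceil$) equal subintervals, and pigeonholes (using $M\ge\lceil\theta^{-1}\rceil+1$) to get $i<j$ with $\lVert (j-i)m\theta\rVert\le\lceil\theta^{-1}\rceil^{-1}\le\theta$, from which it asserts that the orbit point $\{x+(r-|i-j|m)\theta\}$ lands in the forbidden arc $[1-\lceil\theta^{-1}\rceil^{-1},1)$.

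However, your diagnosis of \emph{why} your argument stalls --- that $\lVert\ell\theta\rVert$ is tiny when $\ell$ is a convergent denominator, so no packing or counting argument can produce a contradiction uniformly in $\ell$ --- is not merely a difficulty; it is fatal, and the paper's own proof breaks at exactly this juncture. The paper's final inference is a non sequitur: two points in a common subinterval only give that rotation by $(j-i)m\theta$ is a translation by less than $\lceil\theta^{-1}\rceil^{-1}$ in some direction; it places no particular orbit point in $[1-\lceil\theta^{-1}\rceil^{-1},1)$, and the index $r-|i-j|m$ may not even lie inside the occurrence of $u^M$. Indeed the statement is false as written: take $\theta=1/\sqrt{5}\approx 0.4472$, so $M=\lceil\theta^{-1}\rceil+1=4$; starting the orbit at $y=0.5$ gives the points $0.5,\,0.947,\,0.394,\,0.841,\,0.288,\,0.735,\,0.182,\,0.629$, whose itinerary relative to $[1-\theta,1)=[0.553,1)$ spells $01010101=(01)^4$, a $4$-power, and by density of the orbit this is a factor of every Sturmian word of angle $\theta$. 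More generally, by a theorem of Mignosi, $s_\theta$ avoids $k$-powers for some integer $k$ if and only if $\theta$ has bounded partial quotients, and any correct bound must depend on them (compare the results of \cite{FLLLMPP}). So your three-gap program cannot close the gap uniformly in $\ell$: the missing case is unprovable without an added hypothesis on the partial quotients of $\theta$, and your attempt fails precisely where every attempt must.
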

\begin{proof}
Suppose $w$ is an upper mechanical word; the case for lower mechanical words follows analogously.  Let $w = s_{\theta,x}$ for $x \in \R$.  Suppose, seeking a contradiction, that there exists a factor $u$ of length $m$ such that $u^{M}$ is also a factor.  Let $\{z\}$ denote the fractional part of $z \in \R$.  There is some nonnegative integer $r$ such that 
$$\{x + r\theta\}, \{x + (r+m)\theta\}, \{x + (r + 2m)\theta\}, \cdots, \{x + (r + (M - 1)m)\theta\}$$
either all lie in the interval $[0, 1 - \theta)$ or all lie in the interval $[1 - \theta, 1)$.

In the first case, we divide $[0,1)$ up into $\lceil \theta^{-1}\rceil$ intervals of uniform size, starting at $0$.  By the Pigeonhole Principal, at least two of the $M$ points lie in the same interval.  In other words, there exist $0 \leq q, i , j \leq \lceil \theta^{-1}\rceil$ such that 
$$\{(x + (r + im)\theta\}, \{x + (r + jm)\theta\} \in \bigg[ \frac{q}{\lceil \theta^{-1}\rceil}, \frac{q + 1}{\lceil \theta^{-1}\rceil} \bigg).$$
Hence, $\{x + (r - |i - j|m)\theta\} \in \big[ 1 - \lceil \theta^{-1}\rceil^{-1}, 1\big) \subseteq [1 - \theta,1)$, a contradiction.

In the second case, we divide $[0,1)$ up into $\lceil (1-\theta)^{-1}\rceil$ intervals of uniform size, starting at $0$.  By the Pigeonhole Principal, at least two of the $M$ points lie in the same interval.  In other words, there exist $0 \leq q, i , j \leq \lceil (1-\theta)^{-1}\rceil$ such that 
$$\{(x + (r + im)\theta\}, \{x + (r + jm)\theta\} \in \bigg[ \frac{q}{\lceil (1-\theta)^{-1}\rceil}, \frac{q + 1}{\lceil (1-\theta)^{-1}\rceil} \bigg).$$
Hence, $\{x + (r + |i - j|m)\theta\} \in \big[ 0, \lceil (1-\theta)^{-1}\rceil^{-1}\big) \subseteq [0,1-\theta)$, a contradiction.
\end{proof}

\begin{cor}
Let $x$ be a Sturmian word.  For every $k > 1$, there is an occurrence of a $k$-anti-power starting at every position of $x$.
\end{cor}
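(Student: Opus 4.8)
The plan is to obtain this corollary as an immediate consequence of the preceding theorem together with Theorem 6 of \cite{FRSZ}, the imported anti-Ramsey statement guaranteeing a $k$-anti-power starting at every position of any $\omega$-power-free word on a finite alphabet. Thus it suffices to check that an arbitrary Sturmian word $x$ satisfies the two hypotheses of that theorem: that its alphabet is finite, and that it is $\omega$-power-free.

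First I would dispose of the alphabet condition: a Sturmian word is binary by definition, since having exactly $n+1$ factors of length $n$ forces exactly two factors of length $1$. Hence the finite-alphabet hypothesis holds automatically. Second, and this is the only substantive point, I would extract $\omega$-power-freeness from the previous theorem. By the Morse--Hedlund characterization we may write $x = s_{\theta,x_0}$ or $x = s'_{\theta,x_0}$ for some irrational $\theta \in \R$ and some $x_0 \in \R$, so the preceding theorem applies and shows that $x$ avoids $M$-powers for $M = \lceil(\min\{\theta,1-\theta\})^{-1}\rceil + 1$. Invoking the elementary remark recorded in Section \ref{prelims} that any word avoiding $k$-powers for some $k \in \N$ is $\omega$-power-free, we conclude at once that $x$ is $\omega$-power-free.

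With both hypotheses verified, a single application of Theorem 6 of \cite{FRSZ} yields a $k$-anti-power beginning at each position of $x$ for every $k > 1$, which completes the proof. I do not expect a genuine obstacle here: the real content is carried entirely by the preceding theorem (the bound $M$ on the order of powers occurring in a Sturmian word) and by the anti-Ramsey result imported from \cite{FRSZ}. The only steps requiring care are bookkeeping ones, namely confirming via Morse--Hedlund that $x$ is irrationally mechanical so that the preceding theorem is applicable, and applying the implication ``avoids some $M$-power $\Rightarrow$ $\omega$-power-free'' in the correct direction.
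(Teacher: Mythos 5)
Your proposal is correct and is exactly the argument the paper intends: the preceding theorem shows every Sturmian word avoids $M$-powers for some finite $M$, hence is $\omega$-power-free on its (binary, so finite) alphabet, and Theorem 6 of \cite{FRSZ} then gives a $k$-anti-power starting at every position. The paper itself signals this route just before stating that theorem (``It is enough to show that the Sturmian words are $\omega$-power-free\dots''), so there is nothing to add.
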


\begin{defn}
Given a sequence $\{v_n\}_{n = 1}^\infty$ of finite words, define words $w_n$ by $w_1 = v_1$ and $w_{n+1} = w_nv_nw_n$.  The limit of the sequence of words $\{w_n\}_{n = 1}^\infty$ is called the {\it sesquipower induced by the sequence $\{v_n\}_{n = 1}^\infty$}.
\end{defn}
It is well-known that an infinite word is recurrent if and only if it is a sesquipower (see, for example, \cite{Lot}).  We will show that if an aperiodic recurrent word avoids $k$-anti-powers, then we can deduce some properties about the sequence $\{v_n\}_{n = 1}^\infty$.
\begin{thm}
Let $x$ be the aperiodic sesquipower on a finite alphabet induced by $\{v_n\}_{n=1}^\infty$, and suppose $x$ avoids $k$-anti-powers for some $k \geq 2$.  There exists a word $u$ of length at most $k-1$ such that for all $\ell > 0$, there is some $n > 0$ such that $u^\ell$ is a factor of $v_n$.
\end{thm}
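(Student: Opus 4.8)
The plan is to marry the global repetitiveness forced by anti-power avoidance with the hierarchical structure $w_{n+1}=w_nv_nw_n$. Throughout write $d_n=|w_n|$, $p_n=|v_n|$, and $g_n=d_n+p_n$, so the two copies of $w_n$ inside $w_{n+1}$ occupy positions $[1,d_n]$ and $[g_n+1,g_n+d_n]$ and agree letter for letter. First I would produce a short primitive word with unbounded powers. Since $x$ avoids $k$-anti-powers, no prefix of length $km$ is a $k$-anti-power, so $\AP(x,k)=\emptyset$ and $\underline{d}(\AP(x,k))=0$, which for $k\ge 2$ lies strictly below the threshold in Corollary \ref{inf powers fin alphabet} with $\lambda=1$. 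That corollary then gives a word $u_0$ with $u_0^\ell$ a factor of $x$ for every $\ell$; replacing $u_0$ by its primitive root yields a primitive $u$, say $|u|=q$, with $u^\ell$ a factor of $x$ for all $\ell$.

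Next I would show $q\le k-1$. Suppose $q\ge k$ and fix $\ell=k+1$, so the run $u^\ell$ is a factor of length $q(k+1)\ge k(q+1)$. Inside this $q$-periodic window take the $k$ consecutive blocks of length $m=q+1$ starting at its left end, occupying offsets $0,m,2m,\dots,(k-1)m$. Since $\gcd(m,q)=1$ these offsets are pairwise distinct modulo $q$, and since $q\ge k$ the residues $0,1,\dots,k-1$ are themselves distinct modulo $q$; as $m\ge q$ and $u$ is primitive, two length-$m$ windows of a purely $q$-periodic word agree if and only if their offsets agree modulo $q$. Hence the $k$ blocks are pairwise distinct, forming a $k$-anti-power factor of $x$, a contradiction. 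Therefore $q\le k-1$.

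The main work, and the step I expect to be the obstacle, is to locate powers of $u$ inside the gaps $v_n$. Fix $\ell$ and let $N$ be minimal with $u^\ell$ a factor of $w_N$. Because $u^\ell$ is not a factor of $w_{N-1}$ and both outer blocks of $w_N=w_{N-1}v_{N-1}w_{N-1}$ are copies of $w_{N-1}$, no occurrence of $u^\ell$ lies in either copy, so every occurrence $O=[a,b]$ has $a\le g_{N-1}$ and $b\ge d_{N-1}+1$ and thus meets the central block $M=[d_{N-1}+1,g_{N-1}]$ carrying $v_{N-1}$. The part of $O$ protruding left is a $q$-periodic suffix of $w_{N-1}$, and the part protruding right is a $q$-periodic prefix of $w_{N-1}$. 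The crux is to bound both protrusions by a constant independent of $N$ and $\ell$: the prefix protrusion is at most the maximal $u$-run prefix of $x$, finite since $x$ is aperiodic; and the suffix protrusion is controlled by noting that the length-$d_{m-1}$ suffix of $w_m$ is exactly $w_{m-1}$, so the longest $u$-run suffix $s_m$ of $w_m$ satisfies $s_m=s_{m-1}$ unless $w_{m-1}$ is entirely $q$-periodic, which for large $m$ would force an arbitrarily long $u$-run prefix of $x$ and again contradict aperiodicity. Hence $s_m$ is eventually constant, and both protrusions are bounded by a single constant $C$.

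Granting this, the overlap $O\cap M$ is a $q$-periodic factor of $v_{N-1}$ of length at least $q\ell-C$, so it contains $u^{\ell'}$ with $\ell'\to\infty$ as $\ell\to\infty$. In particular, once $q\ell>C$ the gap $v_{N-1}$ must be nonempty, since an empty gap could only absorb the two bounded protrusions; so the empty-gap case is automatically excluded for large $\ell$ and needs no separate treatment. This exhibits one word $u$ whose powers of unbounded exponent occur inside gaps $v_n$, and since any prefix of $u^{\ell'}$ is again a power of $u$, for every $\ell$ there is an $n$ with $u^\ell$ a factor of $v_n$, with $|u|=q\le k-1$ as required. The delicate point throughout is the uniform bound on the suffix protrusion, where the stabilization of the suffixes of the $w_m$ to fixed finite words must be combined with aperiodicity.
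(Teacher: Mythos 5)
Your proof is correct and follows essentially the same route as the paper's: Corollary \ref{inf powers fin alphabet} supplies a primitive $u$ with unbounded powers, the $k$-anti-power sitting inside $u^{k+1}$ with blocks of length $|u|+1$ forces $|u|\le k-1$, and an occurrence of $u^\ell$ straddling the middle of $w_N=w_{N-1}v_{N-1}w_{N-1}$ has its protrusions into the two copies of $w_{N-1}$ bounded by a constant via aperiodicity and the nesting/border structure, leaving a long power of $u$ inside $v_{N-1}$. Your handling of the protrusion bounds is more explicit than the paper's (which compresses them into the single constant $\ell_0$ attached to a fixed border $w_k$), but the underlying argument is the same.
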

\begin{proof}
Since $x$ avoids $k$-anti-powers, Corollary \ref{inf powers fin alphabet} implies $x$ is not $\omega$-power-free.  Thus, there is some factor $u$ of $x$ such that $u^\ell$ is a factor for every $\ell > 0$.  We can assume $u$ is not an $m$-power for any $m \geq 2$; otherwise, let $u = u_{(|u|/m)}$.  Suppose, seeking a contradiction, that $|u| \geq k$.  Note that the prefix of length $k(|u| + 1)$ of $u^{k + 1}$ is a $k$-anti-power, contradicting the fact that $u^{k+1}$ is a factor of $x$ that avoids $k$-anti-powers.  Thus, $|u| \leq k-1$.

We now know arbitrarily long powers of $u$ occur in $x$, but, in fact, we can show that arbitrarily long powers of $u$ occur in $\{v_n\}_{n = 1}^\infty$.  Since $x$ is not periodic, there exists a $k$ such that $v_k$ is not a factor of $u^\omega$.  Let $\ell_0$ be the largest power of $u$ that is a factor of $x_k$.  For sufficiently large $\ell$, there is some $m \geq k$ such that  $u^\ell$ is a factor of $w_{m+1}$ but not of $w_m$.  We can conclude $u^{\ell - 2\ell_0 - 2}$ is a factor of $v_{m + 1}$, as $w_k$ is a border of $w_m$.  As $\ell_0$ is fixed, this implies that for all $\ell > 0$, $u^\ell$ is a factor of some $v_m$.
\end{proof}

To summarize, if $x$ is an aperiodic recurrent word avoiding $k$-powers for some $k \geq 2$, then $x$ is non-Sturmian and is the sesquipower induced by a sequence $\{v_n\}_{n=1}^\infty$ where the $v_n$ contain arbitrarily long powers of some word $u$.

\section{Avoiding Powers and Anti-Powers}\label{ramsey}

In \cite{FRSZ}, Fici et al. show that for every $\ell,k > 1$, there exists $N_\alpha(\ell,k)$ such that every word of length $N_\alpha(\ell,k)$ on an alphabet of size $\alpha$ contains either an $\ell$-power or a $k$-anti-power.  They prove that for $k > 2$, one has $k^2 - 1 \leq N_\alpha(k,k) \leq k^3{k \choose 2}$.  We improve both these lower and upper bounds.

\begin{thm}\label{nkk bounds}
For any $k > 3$, 
$$ 2k^2 - 2k \leq N_\alpha(k,k) \leq (k^3 - k^2 + k){k \choose 2}.$$
\end{thm}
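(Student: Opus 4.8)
The plan is to treat the two bounds separately; the upper bound is essentially free. A $(k,1)$-anti-power is precisely a $k$-anti-power, so $N_\alpha(k,k) = N_\alpha(k,k,1)$, and Corollary~\ref{improved upper bound} already yields $N_\alpha(k,k) \le (k^3 - k^2 + k){k \choose 2}$ for all $k > 1$. Thus the real content is the lower bound $N_\alpha(k,k) \ge 2k^2 - 2k$, which I would prove by exhibiting one word $x$ of length $2k^2 - 2k - 1$ that simultaneously avoids $k$-powers and $k$-anti-powers. Taking $x$ over the binary alphabet makes the bound uniform in $\alpha$: a binary word is a word over every larger alphabet, and avoidance of both patterns is inherited, so a single $x$ witnesses $N_\alpha(k,k) \ge 2k^2 - 2k$ for all $\alpha \ge 2$.

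For the construction I would build on the extremal word of Theorem~\ref{finite block bounds} in the case $\sigma = 1$, a string of runs $0^{k-1}$ separated by single markers, and roughly double its length. Writing $2k^2 - 2k - 1 = (2k-2)(k-1) + (2k-3)$ points to a word made of $2k-2$ runs $0^{k-1}$ separated by $2k-3$ markers, with the marker letters and the run lengths tuned so that no factor is a $k$-power. Power-avoidance is the easy half: keep every maximal constant run shorter than $k$, and arrange the markers so that a putative $u^k$ is impossible --- if $u$ is a single letter this is the run-length bound, and otherwise $u^k$ would force either an overlong run of $0$'s or more copies of some marker than the construction contains.

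The heart of the argument is that $x$ avoids $k$-anti-powers, i.e.\ that for every block length $m$ and every starting position two of the $k$ consecutive length-$m$ blocks coincide. I would split on the size of $m$. For small $m$ a window of length $km$ meets only a few markers, so at least two of its blocks are entirely $0^m$ and hence equal --- the mechanism already used in Theorem~\ref{finite block bounds} --- while the case $m=1$ is immediate from binary pigeonhole. The obstacle, and the precise reason the naive $0^{k-1}(10^{k-1})^{c}$ word stalls near length $k^2$, is the range $k-1 \le m \le 2k-3$: here a window can carry a marker in almost every block, so one cannot rely on repeated all-zero blocks and must instead force two marker-bearing blocks to be identical. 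This is exactly where uniform spacing is fatal --- if every marker sits at a multiple of $k$ then the marker offset inside consecutive length-$(k-1)$ blocks increases by one each step (since $k-(k-1)=1$) and runs through all residues $0,\dots,k-2$, producing a genuine $k$-anti-power. The run lengths must therefore be detuned from $k$ by just enough that the marker pattern repeats at these intermediate scales while still never completing a $k$-power.

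I expect the crux to reduce, once the exact word is fixed, to a finite pigeonhole-and-periodicity check on the marker positions: for each $m$ in the intermediate range one shows that the arithmetic progression of block-starts, read against the now-irregular marker pattern, must repeat a block, and one separately disposes of the $O(k)$ window positions available at the largest scales $m$ near $2k-3$. Making this repetition hold simultaneously for every such $m$ while preserving power-freeness --- the two requirements pulling in opposite directions --- is the main difficulty, and is what makes the factor-of-two improvement over the $k^2-1$ bound of Fici et al.\ delicate rather than routine.
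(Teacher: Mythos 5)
Your treatment of the upper bound is exactly the paper's: it is quoted verbatim from Corollary~\ref{improved upper bound}, so nothing more is needed there. For the lower bound, your overall strategy --- an explicit binary word of length $2k^2-2k-1$ built from runs of $0$'s separated by single markers, with run lengths ``detuned'' so that the marker offsets repeat at the intermediate block lengths $m$ --- is the right one, and you have correctly located the crux (the range of $m$ where every block can contain a marker, which is precisely where the length-$(k^2-2)$ word of Fici et al.\ stops working). But the proposal stops exactly where the proof begins: you never write down the word, and you explicitly defer the ``finite pigeonhole-and-periodicity check'' that constitutes essentially all of the content. As it stands this is a plan, not a proof; the claim that run lengths \emph{can} be tuned to make the block-repetition hold for every intermediate $m$ simultaneously with power-freeness is precisely what must be demonstrated, and it is not obvious that an arbitrary ``detuning'' works.

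For comparison, the paper takes $x = 1(0^{k-1}1)^{k-2}0^{k-2}10^{k-2}(10^{k-1})^{k-2}1$, which has length $2k^2-2k-1$: the detuning consists of two central runs of length $k-2$ flanking a central $1$, with the two halves being mirror images of the known extremal word. The verification then splits as you predicted, but each piece requires real work: (i) the border of length $k^2-2$ is handled by reducing to the Fici et al.\ word and a short counting argument forcing $m = k-1$ and two equal blocks $0^{k-2}1$; (ii) a factor containing the central $1$ cannot be a $k$-power because it would force two disjoint occurrences of $10^{k-2}1$, of which $x$ has only two overlapping ones; and (iii) for the anti-power case with $k+1 \le m \le 2k-3$, the paper records the number of leading zeros of each block modulo $k$ as an explicit multiset of two arithmetic progressions with common difference $m$, offset by $2$, and shows a collision is unavoidable since $2 \not\equiv 0$ and $m \not\equiv -2 \pmod{k}$ in that range. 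That residue computation --- not a generic pigeonhole --- is the step your sketch is missing, and without it the lower bound is not established.
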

\begin{proof}
The upper bound is precisely the statement of Corollary \ref{improved upper bound}.

For the lower bound, consider the word 
$$x = 1(0^{k-1}1)^{k-2}0^{k-2}10^{k-2}(10^{k-1})^{k-2}1.$$
We begin by showing that the border $1(0^{k-1}1)^{k-2}0^{k-2}10^{k-2}$ of length $k^2 - 2$ avoids $k$-powers and $k$-anti-powers.  In their proof that $k^2 - 1 \leq N_\alpha(k,k)$, Fici et al. \cite{FRSZ} show that the word $(0^{k-1}1)^{k-2}0^{k-2}10^{k-1}$ avoids $k$-powers and $k$-anti-powers, so we need only check this border for $k$-power or $k$-anti-power prefixes.  We can see immediately that there are no $k$-power prefixes: as $1 \leq m \leq k-1$, the first block of length $m$ of the prefix of length $km$ begins with $1$ while the second begins with $0$.  Suppose, seeking a contradiction, that the prefix of length $km$ is a $k$-anti-power for some $m$.  Since the prefix of length $km$ would need to contain at least $k-1$ instances of the letter $1$ to distinguish the blocks, we require $km \geq 1 + k(k-2)$.  Hence, $m \geq k -1$. We know the block length $m$ is at most $k-1$ since $1(0^{k-1}1)^{k-2}0^{k-2}10^{k-2}$ has length $k^2 - 2$.  However, as $k(k-2) + 1 = (k - 1)^2$, the last two blocks must be $0^{k-2}1$.  The equality of these blocks contradicts the assumption that the prefix of length $km$ is a $k$-anti-power.  $1(0^{k-1}1)^{k-2}0^{k-2}10^{k-2}$ avoids both $k$-powers and $k$-anti-powers.

Thus, we need to consider only those factors of $x$ intersecting nontrivially with the prefix and suffix of length $k^2 - 2$.  Fix such a factor $y$ of length $km$ starting at position $j$.

Suppose, seeking a contradiction, that $y$ is a $k$-power. Let $y_\ell = x[j + \ell m..j + (\ell+1)m - 1]$ be the $\ell^{th}$ block of length $m$ in $y$.  Choose $b$ such that the central letter $1$ of $x$ is contained in $y_b$.  That is, $j + bm \leq k(k-1) \leq j + (b+1)m - 1$. Since $k \geq 4$ and there are exactly two occurrences of the factor $10^{k-2}1$ in $x$, the block $y_b$ cannot contain $10^{k-2}1$ as a factor.  Note $y_\ell \not= 0^m$ for any nonnegative integers $\ell$ and $m$.  As $k \geq 4$, one of $b-2, b+2 \in \{0,\dots,k-1\}$; without loss of generality assume it is $b+2$.  Thus, the two factors $y_{b-1}y_b$ and $y_{b+1}y_{b+2}$ of $x$ each contain an occurrence of the factor $10^{k-2}1$.  However, the only two occurrences of $10^{k-2}1$ in $x$ intersect while $y_{b-1}y_b$ and $y_{b+1}y_{b+2}$ are disjoint, so we've reached a contradiction.  Therefore, $x$ avoid $k$-powers.

Now we show that such a factor $y$ is not a $k$-anti-power.  Suppose it were.  Since $y$ contains at least $k-1$ occurrences of the letter $1$, it follows that $m \geq k - 2$.  In the case $m = k - 2$, each block contains at most one occurrence of the letter $1$, but there are only $k-1$ distinct such blocks.  One can check $m \not= k - 1, k$ by examining the period of the prefix/suffix of length $k(k-2)+1$ or the middle section of length $2k-1$.  Thus, taking into consideration the length of $x$, we have $k + 1 \leq m \leq 2k - 3$.  Consider all blocks except $y_b$ (the block containing the central 1).  Note that the letters of each block are determined by the number of leading $0$'s, which is at most $k - 1$.  If there are $\ell$ blocks preceding $y_b$, and $y_{b-1}$ has $z$ leading zeros, then the numbers of leading zeros for all blocks except $y_b$ are given by the multiset
\[\tag{$*$}\label{blockfronts}\{z + (\ell -1)m, \dots, z + m, z, z - 2m - 2, z - 3m - 2, \dots, z - (k-\ell)m - 2\} \mod k\]
which has a repeated element if and only if the multiset
$$\{(l-1)m,\dots,m,0, (k-2)m-2, (k-3)m-2,\dots, \ell m - 2\} \mod k$$
has a repeated element. Assuming this has no repeated element, we have
$$\{\ell m - 2, \dots, (k-2)m-2\} \subseteq \{\ell m, (\ell + 1)m,\dots, (k-1)m\}.$$
The left-hand side has size $k - \ell - 1$ while the right-hand side has size at most $k - \ell$, and both are arithmetic progressions with difference $m$.  Thus, either $\ell m - 2 \equiv \ell m \mod k$ or $\ell m - 2 \equiv (\ell + 1)m \mod k$.  In the former case, $2 \equiv 0 \mod k$, but this would imply $k = 2$, contradicting the fact that $k \geq 4$.  In the latter case, $m \equiv - 2 \mod k$, but this also leads to a contradiction as $k + 1 \leq m \leq 2k - 3$.  Therefore, $x$ avoids $k$-powers and $k$-anti-powers.
\end{proof}

Note that the above bounds are independent of the alphabet size.  This leads to two questions: does $N_\alpha(k,k)$ depend on the size of the alphabet, and if so, in what way?  Note that $N_\alpha(k,k)$ is nondecreasing as $\alpha$ increases.  The following values of $N_2(k,k)$ were computed by Shallit \cite{Sha}. 
\vspace{-0.1cm}
\begin{align*}
N_2(1,1) = 1 \hspace{.75cm} N_2(2,2) = 2 \hspace{.75cm} N_2(3,3) = 9 \hspace{.75cm} N_2(4,4) = 24 \hspace{.75cm} N_2(5,5) = 55
\end{align*} 
For small $\alpha$ and $k$, $N_\alpha(k,k)$ can be computed by testing all $\alpha$-ary strings of small length with a computer.  In particular, we were able to check that $N_4(3,3) = N_2(3,3) = 9$, which implies $N_\alpha(3,3) = 9$ for all $\alpha \geq 2$.  This follows because a word of length $9$ avoiding $3$-anti-powers would use at most $4$ letters.  We also computed that $N_{11}(4,4) = N_2(4,4) = 24$.  It is straightforward to check that a word of length $24$ avoiding $4$-anti-powers would use at most $11$ letters.  Thus, $N_\alpha(3,3)$ and $N_\alpha(4,4)$ are independent of $\alpha$.  It remains open if this is true for all $k$.

Another scenario to investigate is under what conditions a word can be extended (in a potentially larger alphabet) and still avoid $k$-powers and $k$-anti-powers.  We aim to show that for large enough $\alpha$, no word of length $N_\alpha(k,k) - 1$ can be extended (in a larger alphabet) and avoid $k$-powers and $k$-anti-powers.  To do so, we require the following lemma.

\begin{lem}\label{alphabet mess}
If there exists $\alpha \geq 2$ such that $N_\alpha(k,k) < N_{\alpha + 1}(k,k)$, then one of the following must hold:
\begin{enumerate}
    \item Let $W_{k,\alpha}$ be the set of words on $[\alpha + 1]$ of length $N_\alpha(k,k)$ that avoid $k$-powers and $k$-anti-powers.  For every word $w \in W_{k,\alpha}$, the two factors of $w$ of length $N_\alpha(k,k)-1$ each use exactly $\alpha + 1$ letters.
    \item There exists a word on $[\alpha]$ of the form 
    $$w = u_1(1u_1)^{k-1}x_1 =  u_2(2u_2)^{k-1}x_2 = \cdots =  u_\alpha(\alpha u_\alpha)^{k-1}x_\alpha$$
    that avoids $k$-powers and $k$-anti-powers, where $x_1,\dots,x_\alpha,u_1,\dots,u_\alpha$ are finite words, $|u_1| < \cdots < |u_\alpha|$, and for all $1 \leq i < j \leq \alpha$, 
    $$\gcd(|u_i| + 1, |u_j| + 1) \leq \frac{|u_j| + 1}{k-1}.$$
\end{enumerate}
\end{lem}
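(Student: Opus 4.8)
The plan is to exploit the defining minimality of $N_\alpha(k,k)$ together with the observation that a word in $W_{k,\alpha}$ must spend its ``extra'' letter very economically. Write $N = N_\alpha(k,k)$. The hypothesis $N_\alpha(k,k) < N_{\alpha+1}(k,k)$ guarantees a word on $[\alpha+1]$ of length $N_{\alpha+1}(k,k)-1 \ge N$ avoiding $k$-powers and $k$-anti-powers, and any factor of length $N$ of it lies in $W_{k,\alpha}$, so $W_{k,\alpha} \ne \emptyset$. Moreover every $w \in W_{k,\alpha}$ must use all $\alpha+1$ letters, since otherwise it would be, after an injective relabeling, a word on $[\alpha]$ of length $N_\alpha(k,k)$, which by definition contains a $k$-power or $k$-anti-power. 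First I would assume that condition (1) fails, so there is some $w \in W_{k,\alpha}$ one of whose two length-$(N-1)$ factors uses at most $\alpha$ letters. Reversal preserves the property of avoiding $k$-powers and $k$-anti-powers as well as the value $N_\alpha(k,k)$, so after possibly reversing $w$ I may assume it is the suffix $w[2..N]$ that omits a letter. Since $w$ uses all $\alpha+1$ letters, that letter occurs only at position $1$; relabeling, $w[1] = \alpha+1$ occurs exactly once, and $s \defeq w[2..N]$ is a word on $[\alpha]$ of length $N-1$ that avoids $k$-powers and $k$-anti-powers.

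The heart of the argument is to feed $s$ back into the definition of $N_\alpha(k,k)$. For each letter $i \in [\alpha]$, the word $i\cdot s$ has length $N$ over $[\alpha]$, so it contains a $k$-power or $k$-anti-power; as every factor avoiding position $1$ is a factor of $s$ and hence neither, the forced structure is a prefix, say $(i\cdot s)[1..km]$. Here I would compare $i\cdot s$ with $w = (\alpha+1)s$: their length-$km$ prefixes share blocks $2,\dots,k$ and differ only in the first letter of block $1$. In $w$ the first block is the unique block containing $\alpha+1$, so if $(i\cdot s)[1..km]$ were a $k$-anti-power, with all blocks distinct, then the blocks of $w[1..km]$ would also be pairwise distinct, making $w$ contain a $k$-anti-power, a contradiction. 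Thus $(i\cdot s)[1..km]$ is a $k$-power $(iu_i)^k$, which after deleting the leading $i$ exhibits $s = u_i(iu_i)^{k-1}x_i$ with $m_i \defeq |u_i|+1 = m$; this is exactly the shape required in (2).

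It then remains to pin down the length inequalities. Reading the first two blocks of $(iu_i)^k$ gives $s[m_i] = i$, so the block length $m$ of a $k$-power prefix of $i\cdot s$ determines its initiating letter; in particular $m_i = m_j$ forces $i = j$, so the $m_i$ are distinct and I may relabel $[\alpha]$ so that $|u_1| < \cdots < |u_\alpha|$. More generally the $k$-power prefix forces $s[cm_i] = i$ for every $c \in \{1,\dots,k-1\}$. Two distinct letters can therefore never share a forced position: if $i \ne j$ and $\mathrm{lcm}(m_i,m_j) \le (k-1)m_i$, then $t = \mathrm{lcm}(m_i,m_j)$ is simultaneously a multiple $cm_i$ and $c'm_j$ with $c,c' \le k-1$, whence $i = s[t] = j$. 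Ruling this out yields $m_j/\gcd(m_i,m_j) > k-1$ for $i < j$, i.e. $\gcd(|u_i|+1,|u_j|+1) \le (|u_j|+1)/(k-1)$, establishing (2).

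The main obstacle is the anti-power exclusion in the middle step: the whole dichotomy rests on showing that the structure forced in $i\cdot s$ is a power rather than an anti-power, and this works precisely because the single surplus letter $\alpha+1$ isolates the first block of $w$. The remaining subtlety is bookkeeping, namely confirming that reversal legitimately reduces the prefix-deficient case to the suffix case, and that the periodicity $s[cm_i]=i$ holds on the full range $c \le k-1$ so that the least common multiple lands inside the forced positions.
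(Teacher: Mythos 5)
Your proof is correct and follows essentially the same route as the paper's: negate case (1), reduce (via reversal and relabeling) to a word $(\alpha+1)s$ whose extra letter occurs exactly once at the front, force a $k$-power prefix of $a\cdot s$ for each $a\in[\alpha]$ by ruling out the $k$-anti-power alternative using the isolated letter $\alpha+1$, and extract the gcd condition from the forced equalities $s[c(|u_i|+1)]=i$ for $1\le c\le k-1$. You are somewhat more explicit than the paper about the reversal/relabeling bookkeeping and the lcm computation, but the substance is identical.
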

\begin{proof}
Suppose that the first case does not hold.  There is a word $w$ of length $N_\alpha(k,k) - 1$ on $[\alpha]$ such that $(\alpha + 1)w$, the extension of $w$ by the addition the letter $\alpha + 1$ on the left, avoids $k$-powers and $k$-anti-powers.  Since $|w|$ is maximal for words on $[\alpha]$ avoiding $k$-powers and $k$-anti-powers, the extension $aw$ contains a $k$-power or $k$-anti-power for any $a \in [\alpha]$.  As $(\alpha + 1)w$ contains no $k$-anti-powers, neither does $aw$ for any $a \in [\alpha]$.  Thus, $aw$ has a prefix that is a $k$-power for each $a \in [\alpha]$.

Hence,
$$w = u_1(1u_1)^{k-1}x_1 =  u_2(2u_2)^{k-1}x_2 = \cdots = u_\alpha (\alpha u_\alpha)^{k-1}x_\alpha,$$
where $x_1,\dots,x_\alpha,u_1,\dots,u_\alpha$ are finite words.  Note that this implies $w[m(|u_\ell| + 1)] = \ell$ for all $1 \leq m \leq k - 1$ and $1 \leq \ell \leq k$.  Without loss of generality (since the labels of the letters are arbitrary), we can assume $|u_1| < \cdots < |u_\alpha|$.  Suppose, seeking a contradiction, that for some $1 \leq i < j \leq \alpha$, we have $\gcd(|u_i| + 1, |u_j| + 1) \geq \frac{|u_j| + 1}{k-1}$.  There is some $1 \leq m \leq k-1$ such that $m(|u_i| + 1) \equiv 0 \mod (|u_j| + 1)$.  Hence, $m(|u_i| + 1) = d(|u_j| + 1)$ for some $1 \leq d \leq k-1$.  However, this implies 
$$i = w[m(|u_i| + 1)] = w[d(|u_j| + 1)] = j.$$
Since we assumed $i< j$, we've reached a contradiction.
\end{proof}

An investigation of the failure of the first case leads to the following corollary.

\begin{cor}\label{no extension}
Suppose $\alpha > \frac{N_\alpha(k,k)}{k} - k + 3$.  If a word $w$ has a factor $u \not= w$ of length $N_\alpha(k,k) - 1$ that uses only $\alpha$ letters, $w$ contains a $k$-power or $k$-anti-power.
\end{cor}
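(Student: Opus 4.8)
The plan is to prove the contrapositive. Assume $w$ avoids $k$-powers and $k$-anti-powers and has a factor $u \neq w$ with $|u| = N_\alpha(k,k) - 1$ using only $\alpha$ letters; I will contradict the hypothesis $\alpha > \tfrac{N_\alpha(k,k)}{k} - k + 3$. Throughout I stay in the regime $\alpha \geq 2$ that governs Lemma \ref{alphabet mess}; the statement is in fact false for $\alpha = 1$, since $0^{k-1}1$ avoids $k$-powers and $k$-anti-powers yet has the one-letter factor $0^{k-1}$ of length $k - 1 = N_1(k,k) - 1$.

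First I would produce a witness against the first alternative of Lemma \ref{alphabet mess}. Because $u$ is a proper factor of $w$, some one-letter extension of $u$ inside $w$ — either $au$ or $ua$ — is again a factor of $w$; call it $\tilde u$. It has length $N_\alpha(k,k)$ and avoids $k$-powers and $k$-anti-powers. By the defining property of $N_\alpha(k,k)$, a word of this length avoiding both patterns cannot lie on an alphabet of size $\alpha$, so the adjoined letter is new; hence $\tilde u$ uses exactly $\alpha + 1$ letters and, after relabelling, $\tilde u \in W_{k,\alpha}$. One of the two length-$(N_\alpha(k,k)-1)$ factors of $\tilde u$ is precisely $u$, which uses only $\alpha$ letters, so the first alternative of the lemma fails. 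Since $\tilde u$ also certifies $N_{\alpha+1}(k,k) > N_\alpha(k,k)$, the lemma applies and its second alternative must hold.

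That alternative yields a word $v$ on $[\alpha]$ of length $N_\alpha(k,k) - 1$ with $v = u_\ell(\ell u_\ell)^{k-1}x_\ell$ for every $\ell \in [\alpha]$. Put $p_\ell = |u_\ell| + 1$; as recorded in the proof of Lemma \ref{alphabet mess}, $v[mp_\ell] = \ell$ for all $1 \leq m \leq k-1$. Two consequences drive the argument. First, the prefix $u_\alpha(\alpha u_\alpha)^{k-1}$ has length $kp_\alpha - 1 \leq N_\alpha(k,k) - 1$, so $p_\alpha \leq \tfrac{N_\alpha(k,k)}{k}$. Second — and this is the crux — the multiple-sets $\{mp_\ell : 1 \leq m \leq k-1\}$ are pairwise disjoint, because a coincidence $mp_i = m'p_j$ would force $i = v[mp_i] = v[m'p_j] = j$. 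Applying this with $i=1$, $j=2$ and writing $g = \gcd(p_1, p_2)$, the progressions of multiples of $p_1$ and of $p_2$ first meet at $\mathrm{lcm}(p_1,p_2) = \tfrac{p_2}{g}p_1$, so disjointness through their first $k-1$ multiples forces $\tfrac{p_2}{g} > k-1$, whence $p_2 \geq kg \geq k$.

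Finally, since $p_2 < p_3 < \cdots < p_\alpha$ are distinct integers all at least $k$, the largest obeys $p_\alpha \geq k + (\alpha - 2) = \alpha + k - 2$. Combined with $p_\alpha \leq \tfrac{N_\alpha(k,k)}{k}$ this gives $\alpha \leq \tfrac{N_\alpha(k,k)}{k} - k + 2$, contradicting the hypothesis $\alpha > \tfrac{N_\alpha(k,k)}{k} - k + 3$. I expect the main obstacle to be the crux step: extracting $p_2 \geq k$, and hence $p_\alpha \geq \alpha + k - 2$, from the disjointness of the multiple-sets. It rests on the elementary fact that two multiple-progressions first collide at their least common multiple, so remaining disjoint through $k-1$ terms forces the larger period to exceed $k-1$; everything else is bookkeeping with lengths and the definition of $N_\alpha(k,k)$.
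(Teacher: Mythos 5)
Your proof is correct and follows essentially the same route as the paper's: invoke Lemma \ref{alphabet mess}, use the gcd/collision condition to bound $|u_j|+1$ below (you get the marginally sharper $|u_j|+1\geq k$ in place of the paper's $\geq k-1$ by rederiving the collision at $\mathrm{lcm}(p_i,p_j)$ directly from $v[mp_\ell]=\ell$), then combine strict monotonicity of the $|u_j|$ with the length bound $k(|u_\alpha|+1)-1\leq N_\alpha(k,k)-1$ to contradict the hypothesis on $\alpha$. The only substantive additions are that you explicitly verify the lemma's applicability --- ruling out its first alternative via the one-letter extension $\tilde u$ and certifying $N_{\alpha+1}(k,k)>N_\alpha(k,k)$, steps the paper leaves implicit --- and your correct observation that the statement requires $\alpha\geq 2$.
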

\begin{proof}
Suppose, seeking a contradiction, that $w$ is as above but contains no $k$-power or $k$-anti-power.  For all $1 \leq i < j \leq \alpha$, we have by Lemma \ref{alphabet mess} that
$$1 \leq \gcd(|u_i| + 1, |u_j| + 1) \leq \frac{|u_j| + 1}{k-1}.$$
Thus, $|u_j| \geq k - 2$ for all $j \geq 2$.  Since the $|u_j|$'s are strictly increasing, this implies $|u_\alpha| \geq (\alpha - 2) + (k - 2) = \alpha + k - 4$.  As
$w = u_\alpha(\alpha u_\alpha)^{k-1}x_\alpha$, we have 
$$|w| \geq k(\alpha + k - 4) + k-1 = k\alpha + k^2 - 3k - 1.$$
Since $w$ is a word on $[\alpha]$ avoiding $k$-powers and $k$-anti-powers, $k\alpha + k^2 - 3k - 1 \leq N_\alpha(k,k)$.  If this inequality is not satisfied, then we can conclude $w$ is as above but contains a $k$-power or $k$-anti-power.
\end{proof}

\section{Block Patterns and Their Expectation}\label{expect}
In this section, we return to the general setting of block-patterns to calculate the expected number of $(\mu_1,\dots,\mu_n)$-block-patterns in a word of length $n$ on an alphabet of size $\alpha$.  The special case of this expectation for $k$-powers was calculated by Christodoulakis, Christou, Crochemore, and Iliopoulos in \cite{CCCI}.

\begin{thm}\label{counting powers}\emph{(\cite{CCCI}, Theorem 4.1)}
On average, a word of length $n$ has $\Theta(n)$ $k$-powers.  More precisely, this number is 
$$(n + 1)\frac{\alpha^{1-k}(1 - \alpha^{(1-k)\lfloor\frac{n}{k}\rfloor})}{1 - \alpha^{1-k}} - \frac{k}{\alpha^{k - 1}}\left( \frac{1}{1-\alpha^{1-k}} - \frac{\lfloor \frac{n}{k} \rfloor \alpha^{(1-k)\lfloor\frac{n}{k}\rfloor}}{1 - \alpha^{1-k}} + \frac{\alpha^{1-k}(1 - \alpha^{(1-k)(\lfloor\frac{n}{k}\rfloor - 1)})}{(1 - \alpha^{1-k})^2}\right).$$
\end{thm}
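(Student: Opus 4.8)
The plan is to model the random word $x$ as a string of $n$ independent letters, each drawn uniformly from $[\alpha]$, and to count \emph{occurrences} of $k$-powers: pairs $(i,m)$ for which the factor $x[i..i+km-1]$ has the form $u^k$ with $|u|=m$. By linearity of expectation the expected count splits as $\sum_{(i,m)} \P\big(x[i..i+km-1] \text{ is a } k\text{-power}\big)$, so the whole computation reduces to evaluating a single such probability and counting the admissible pairs $(i,m)$.

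First I would fix a block length $m$ and a starting position $i$ with $km \le n$. The factor $x[i..i+km-1]$ is a $k$-power exactly when its $k$ consecutive length-$m$ blocks coincide; since the letters are independent and uniform, the first block is arbitrary and each of the remaining $k-1$ blocks must equal it, so the probability is $\alpha^m/\alpha^{km} = \alpha^{(1-k)m}$. For each such $m$ there are exactly $n-km+1$ admissible starting positions, and $m$ ranges over $1 \le m \le \lfloor n/k\rfloor$. Writing $q = \alpha^{1-k}$ and $M = \lfloor n/k\rfloor$, the expectation becomes
$$\sum_{m=1}^{M}(n-km+1)\,q^m = (n+1)\sum_{m=1}^{M} q^m - k\sum_{m=1}^{M} m\,q^m.$$

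It then remains to put the two sums in closed form. The first is the geometric sum $\sum_{m=1}^{M} q^m = q(1-q^M)/(1-q)$, which directly yields the leading term of the stated formula. The genuinely fiddly step — and the one I expect to be the main obstacle — is the weighted sum $\sum_{m=1}^{M} m\,q^m$; I would obtain it from the telescoping identity $(1-q)\sum_{m=1}^{M} m q^m = \sum_{m=1}^{M} q^m - M q^{M+1}$ (equivalently by differentiating the geometric sum in $q$), giving $\sum_{m=1}^{M} m q^m = q\big(1 - (M+1)q^M + M q^{M+1}\big)/(1-q)^2$. Substituting both evaluations, using $k/\alpha^{k-1} = kq$, and regrouping the resulting rational function of $q$ so that the $q^{M}$ and $q^{M-1}$ powers line up with the displayed expression reproduces the closed form; this matching is routine but requires care. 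Finally, for the $\Theta(n)$ claim, note that $q \in (0,1)$ whenever $k\ge 2$ and $\alpha \ge 2$, so both $q^M$ and $M q^M$ tend to $0$ as $n\to\infty$; the first sum then contributes $(n+1)q/(1-q)$ while the second stays bounded, whence the expectation equals $\frac{q}{1-q}\,n + O(1) = \Theta(n)$.
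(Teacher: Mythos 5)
Your proof is correct: the linearity-of-expectation decomposition over occurrences $(i,m)$, the per-occurrence probability $\alpha^{(1-k)m}$, the count $n-km+1$ of starting positions, and the two geometric-sum evaluations (with $q=\alpha^{1-k}$, $M=\lfloor n/k\rfloor$) do reproduce the displayed closed form and the $\Theta(n)$ asymptotic. Note that the paper states this result without proof, citing \cite{CCCI}, Theorem 4.1; your argument is the same standard computation, and it is precisely the technique the paper itself uses for the more general Theorem \ref{counting patterns}, of which this is the case $\mu_k=1$.
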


\begin{thm}\label{counting patterns}
On average, a word of length $n$ has $O(n^2)$ and $\Omega(n)$ $(\mu_1,\dots,\mu_k)$-block-patterns. More precisely, the expected number of $(\mu_1,\dots,\mu_k)$-block-patterns is
$$\sum_{m = 1}^{\lfloor \frac{n}{k}\rfloor} (n + 1 -km) \frac{k!}{\mu_1!\cdots\mu_k!} \frac{1}{\alpha^{km}}\prod_{\ell = 1}^{\mu_1+\cdots + \mu_k} \left(\alpha^{m} - (\ell - 1)\right).$$
\end{thm}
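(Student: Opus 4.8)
The plan is to compute the expectation by linearity over all candidate factors, classified by their block-length $m$ and starting position. For a fixed block-length $m$ with $1 \le m \le \lfloor n/k \rfloor$, a factor of length $km$ occupies a window $[p..p+km-1]$ for some starting position $p$; the number of such windows inside a word of length $n$ is $n+1-km$. By linearity of expectation, the expected number of $(\mu_1,\dots,\mu_k)$-block-patterns is $\sum_{m} (n+1-km) \cdot \Pr[\text{a uniformly random length-}km\text{ block is a }(\mu_1,\dots,\mu_k)\text{-block-pattern}]$, so the entire problem reduces to computing this single probability, which by symmetry does not depend on $p$.

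First I would fix the block-length $m$ and model the $k$ blocks $w_1,\dots,w_k$ as $k$ independent uniform random words in $[\alpha]^m$, so there are $\alpha^{km}$ equally likely outcomes. I need to count those outcomes whose partition-by-equality has exactly $\mu_s$ parts of size $s$ for each $s$. The counting proceeds in two stages. The multinomial factor $\frac{k!}{\mu_1!\cdots\mu_k!}$ counts the number of ways to assign the $k$ indices to an unordered collection of blocks of the prescribed shape; more carefully, $\frac{k!}{\prod_s (s!)^{\mu_s}\mu_s!}$ counts set-partitions of $\{1,\dots,k\}$ into $\mu_s$ blocks of size $s$, and since the total number of parts is $P := \mu_1 + \cdots + \mu_k$, one then assigns $P$ distinct words to these parts in a falling-factorial count. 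I would verify that the product $\frac{k!}{\prod_s (s!)^{\mu_s}\mu_s!} \cdot (\text{ordered injection of }P\text{ words})$ collapses to the stated $\frac{k!}{\mu_1!\cdots\mu_k!}$ once the $(s!)^{\mu_s}$ factors are absorbed. The number of ways to choose $P$ distinct words from $[\alpha]^m$ and place them into a labelled list of $P$ parts is the falling factorial $\alpha^m(\alpha^m-1)\cdots(\alpha^m - (P-1)) = \prod_{\ell=1}^{P}(\alpha^m - (\ell-1))$, giving exactly the product appearing in the statement. Dividing by $\alpha^{km}$ converts this count to a probability.

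Assembling the pieces, the expected count is
$$\sum_{m=1}^{\lfloor n/k\rfloor} (n+1-km)\,\frac{k!}{\mu_1!\cdots\mu_k!}\,\frac{1}{\alpha^{km}}\prod_{\ell=1}^{P}\bigl(\alpha^m-(\ell-1)\bigr),$$
matching the claim. For the asymptotics, the outer sum has $O(n)$ terms and the summand is $O(n)$ (bounded by $n+1-km \le n$ times a quantity at most $1$, since the falling factorial divided by $\alpha^{km}$ is a probability and hence at most $1$), yielding the $O(n^2)$ upper bound. For the $\Omega(n)$ lower bound I would isolate a single well-chosen term, for instance $m=1$, where the probability of being a $(\mu_1,\dots,\mu_k)$-block-pattern is a positive constant (depending only on $k$ and $\alpha$), contributing $\Theta(n)$ to the sum.

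The main obstacle I anticipate is the bookkeeping in the combinatorial count: correctly reconciling the set-partition count $\frac{k!}{\prod_s (s!)^{\mu_s}\mu_s!}$ with the simpler multinomial coefficient $\frac{k!}{\mu_1!\cdots\mu_k!}$ in the statement, and making sure the falling-factorial range runs up to $P = \mu_1+\cdots+\mu_k$ (the number of \emph{distinct} block-values) rather than up to $k$. Everything else — linearity of expectation, the window count $n+1-km$, and the crude asymptotic bounds — is routine.
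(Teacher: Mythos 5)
Your strategy is the same as the paper's: linearity of expectation over the $n+1-km$ windows of each admissible length $km$, followed by a count of the block-patterns of a fixed length via a set partition of the block indices together with an injective assignment of words to the parts. But the one step you explicitly defer is exactly the step that cannot be completed. The number of set partitions of $\{1,\dots,k\}$ with $\mu_s$ parts of size $s$ is $k!\big/\bigl(\prod_s (s!)^{\mu_s}\mu_s!\bigr)$, and the injective assignment contributes precisely the falling factorial $\prod_{\ell=1}^{P}\bigl(\alpha^m-(\ell-1)\bigr)$ already present in the statement, so there is nothing left for the factors $(s!)^{\mu_s}$ to be ``absorbed'' into. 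Your count therefore produces the coefficient $k!\big/\bigl(\prod_s (s!)^{\mu_s}\mu_s!\bigr)$, which agrees with the stated $k!/(\mu_1!\cdots\mu_k!)$ only when $\mu_s=0$ for all $s\ge 2$. A sanity check with $k$-powers ($\mu_k=1$, all other $\mu_s=0$, $P=1$) makes the discrepancy concrete: the stated coefficient is $k!$, which would give probability $k!\,\alpha^{(1-k)m}$ for a window of length $km$ to be a $k$-power, contradicting Theorem \ref{counting powers}, whose leading term corresponds to probability $\alpha^{(1-k)m}$; the correct coefficient there is $1$. (One can also check that only the set-partition coefficients make the counts over all types $\mu$ sum to $\alpha^{km}$.) So your argument, carried out honestly, does not reach the stated formula; it proves the corrected identity and thereby exposes an error in the statement's coefficient. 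For what it is worth, the paper's own proof describes exactly the bijection you describe (an unlabeled partition plus an ordered tuple of distinct words, matched by order of appearance of the parts) and then writes down the multinomial coefficient without justification, so it glosses over the same unbridgeable step.

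A smaller issue: for the $\Omega(n)$ bound you isolate the $m=1$ term, but that term vanishes whenever $P=\mu_1+\cdots+\mu_k>\alpha$ (for instance, $k$-anti-powers over a binary alphabet with $k\ge 3$), since the falling factorial is then $0$. Any fixed $m_0$ with $\alpha^{m_0}\ge k$ works instead, contributing $(n+1-km_0)$ times a positive constant. The paper takes a different route here, deducing $\Omega(n)$ by comparison with the $k$-power count of Theorem \ref{counting powers}; your single-term argument, once the choice of $m_0$ is repaired, is actually the more self-contained of the two.
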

\begin{proof}
Let $x$ be a word of length $n$, drawn uniformly at random.  Let 
$$X_{i,j} = \begin{cases} 1 & \text{ if $x[i..j]$ is a $(\mu_1,\dots,\mu_k)$-block-pattern;}\\ 0 & \text{ otherwise.} \end{cases}$$
Let $N = \sum_{i \leq j} X_{i,j}$.  That is, $N$ is the number of $(\mu_1,\dots,\mu_k)$-block-patterns in $x$. We have
\begin{align*}
\E\left[N\right] &= \E\left[\sum_{i = 1}^{n - k + 1} \sum_{j = i + 1}^n X_{i,j}\right]\\
&=\sum_{i = 1}^{n - k + 1} \sum_{j = i + 1}^n \E\left[X_{i,j}\right]\\
&=\sum_{i = 1}^{n - k + 1} \sum_{j = i + 1}^n \P\left(\text{$x[i..j]$ is a $(\mu_1,\dots,\mu_k)$-block-pattern}\right).
\end{align*}

Let us count the number of $(\mu_1,\dots,\mu_k)$-block-patterns of length $\alpha^{j + 1 - i}$ on $[\alpha]$.  Partition $[k]$ into unlabeled parts with $\mu_s$ parts of size $s$, and choose $\mu_1 + \cdots + \mu_k$ distinct ordered elements from $\alpha^{(j + 1 - i)/k}$.  We can assign elements to parts by order of appearance of the parts, which will yield a $(\mu_1,\dots,\mu_k)$-block-pattern.  Moreover, the block-pattern is uniquely determined by the choice of an unlabeled partition and ordered $m$-tuple.  Let $[A]$ denote the indicator function of the event $A$.  We have 
\begin{align*}
\E\left[N\right] &=\sum_{i = 1}^{n - k + 1} \sum_{j = i + 1}^n \frac{k!}{\mu_1!\cdots\mu_k!} \frac{1}{\alpha^{j + 1 - i}}\prod_{\ell = 1}^{\mu_1+\cdots + \mu_k} \left(\alpha^{(j + 1 - i)/k} - (\ell - 1)\right)[j + 1 - i \equiv 0 \mod k]\\
&= \sum_{m = 1}^{\lfloor \frac{n}{k}\rfloor} (n + 1 -km) \frac{k!}{\mu_1!\cdots\mu_k!} \frac{1}{\alpha^{km}}\prod_{\ell = 1}^{\mu_1+\cdots + \mu_k} \left(\alpha^{m} - (\ell - 1)\right).
\end{align*}
Since there are only ${n \choose 2} + n$ nonempty factors of $x$, we have $\E\left[N\right] = O(n^2)$.  Note that the expectation is minimized for $k$-powers, where $\mu_k = 1$ and $\mu_s = 0$ for all $s < k$.  Thus, from Theorem \ref{counting powers}, we have $\E\left[N\right] = \Omega(n)$.
\end{proof}

\begin{cor}
On average, a word of length $n$ has $\Theta(n^2)$ $k$-anti-powers.  More precisely, the expected number of $k$-anti-powers is
$$\sum_{m = 1}^{\lfloor \frac{n}{k}\rfloor} (n + 1 - km) \prod_{\ell = 0}^{k-1} \left(1 - \frac{\ell}{\alpha^m}\right).$$
\end{cor}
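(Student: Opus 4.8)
The plan is to obtain the corollary by specializing Theorem~\ref{counting patterns} to the block-pattern parameters that encode $k$-anti-powers, and then to extract the asymptotics. By the remark following Definition~\ref{block patterns}, a word of length $km$ is a $k$-anti-power exactly when all $k$ of its blocks are distinct, i.e.\ exactly when it is a $(\mu_1,\dots,\mu_k)$-block-pattern with $\mu_1 = k$ and $\mu_2 = \cdots = \mu_k = 0$ (all $k$ parts of the partition have size $1$). So the first step is to insert these values into the formula of Theorem~\ref{counting patterns}.

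Under this substitution the multinomial coefficient collapses, since $\frac{k!}{\mu_1!\cdots\mu_k!} = \frac{k!}{k!} = 1$, and the number of parts is $\mu_1 + \cdots + \mu_k = k$, so the product runs from $\ell = 1$ to $k$. The $m$-th summand becomes $(n + 1 - km)\,\alpha^{-km}\prod_{\ell = 1}^{k}(\alpha^m - (\ell - 1))$. The second step is the routine rewriting: because $\alpha^{-km} = (\alpha^{-m})^k$ and the product has exactly $k$ factors, I distribute one factor $\alpha^{-m}$ into each term to obtain $\prod_{\ell = 1}^{k}(1 - \frac{\ell - 1}{\alpha^m})$, and reindexing $\ell \mapsto \ell + 1$ so that $\ell$ runs from $0$ to $k - 1$ gives precisely $\prod_{\ell = 0}^{k-1}(1 - \frac{\ell}{\alpha^m})$. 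This establishes the exact formula.

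For the asymptotics, the upper bound $O(n^2)$ is inherited directly: $k$-anti-powers form a subclass of block-patterns, so the $O(n^2)$ bound of Theorem~\ref{counting patterns} (which comes from a word of length $n$ having only ${n \choose 2} + n$ factors) applies without change. The genuine content is the matching lower bound $\Omega(n^2)$, which is exactly where $k$-anti-powers improve on the general $\Omega(n)$ of Theorem~\ref{counting patterns} (that bound being governed by the exponentially rare $k$-powers). First note every summand is nonnegative: if $\alpha^m \leq k - 1$ then the factor with $\ell = \alpha^m$ vanishes and the product is $0$, while if $\alpha^m \geq k$ every factor is strictly positive. The key point is that each factor $1 - \ell/\alpha^m$ is increasing in $m$, so once $\alpha^m$ passes a fixed threshold the whole product is bounded below by a positive constant depending only on $k$: taking $m \geq m_0 := \lceil \log_\alpha(2k)\rceil$ forces $\alpha^m \geq 2k$, whence each factor exceeds $1 - \frac{k-1}{2k} > \frac{1}{2}$ and the product exceeds $2^{-k}$.

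It then remains to restrict the sum to the range $m_0 \leq m \leq \lfloor n/(2k)\rfloor$. On this range $km \leq n/2$, so every coefficient $n + 1 - km$ is at least $n/2$, and the range contains $\Theta(n)$ integers; dropping the remaining nonnegative terms only decreases the sum, so $\E[N] \geq 2^{-k}\cdot\frac{n}{2}\cdot\Theta(n) = \Omega(n^2)$. Together with the upper bound this yields $\Theta(n^2)$. I expect the only real care needed to be bookkeeping: confirming that $m_0$ is a constant independent of $n$ and that $m_0 \leq \lfloor n/(2k)\rfloor$ for all sufficiently large $n$, so that the restricted range is nonempty and the estimate is valid.
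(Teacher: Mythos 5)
Your proposal is correct and follows essentially the same route as the paper: specialize Theorem~\ref{counting patterns} to $\mu_1 = k$, $\mu_s = 0$ for $s>1$, then lower-bound the sum by restricting $m$ to a range where the coefficient $n+1-km$ is $\Omega(n)$ and the product is bounded below by a constant depending only on $k$. The only (immaterial) difference is your choice of range $\lceil\log_\alpha(2k)\rceil \leq m \leq \lfloor n/(2k)\rfloor$ versus the paper's $\lfloor n/(4k)\rfloor \leq m \leq \lfloor 3n/(4k)\rfloor$, and your explicit justification that the discarded summands are nonnegative is a detail the paper leaves implicit.
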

\begin{proof}
The formula follows Theorem \ref{counting patterns} in the case $\mu_1 = k$ and $\mu_s = 0$ for $s > 1$. Restricting the sum (of nonnegative terms) to the range $\left\lfloor \frac{n}{4k}\right\rfloor \leq m \leq \left\lfloor \frac{3n}{4k}\right\rfloor$, we see
$$\E\left[\#(\text{$k$-anti-powers in $x$)}\right] \geq \left(\frac{n}{2k} - 1\right)\left(\frac{n}{4} + 1\right) \left( 1 - \frac{k- 1}{\alpha^{\lfloor \frac{n}{4k}\rfloor}}\right)^{k-1}.$$
For $n > 4k\left(1 + \log_\alpha \frac{k - 1}{1 - 2^{-k-1}}\right)$, we have $\left( 1 - \frac{k- 1}{\alpha^{\lfloor \frac{n}{4k}\rfloor}}\right)^{k-1} > \frac{1}{2}$, hence
$$\E\left[\#(\text{$k$-anti-powers in $x$)}\right] > \frac{n^2}{16k} - \frac{n}{4} - \frac{1}{2} = \Omega(n^2).\qedhere$$
\end{proof}

\section{Further Directions}\label{further}

Recall that Theorem \ref{inf powers or patterns} shows that having a small enough density of $(\mu_1,\dots,\mu_k)$-block-pattern prefixes with few equal blocks implies the existence of arbitrarily long power prefixes.  We believe that a strengthening of this argument could yield a lower bound on the density of $P(x,k)$, the set of $m \in \N$ such that the prefix of $x$ of length $km$ is a $k$-power.  In Section \ref{block general}, we also remark that Theorem 6 of \cite{FRSZ}, stating that if $x$ is an $\omega$-power-free word then $AP(x_{(j)},k)$ is nonempty for every $j$ and $k$, is false if we allow infinite alphabets.  Perhaps there is a finer characterization of which $\omega$-power-free words fail this condition.

As in the bounds found by Fici et al. \cite{FRSZ}, our upper and lower bounds for $N_\alpha(k,k)$ are polynomials in $k$ whose degrees differ by $3$.  If it is the case that $N_\alpha(k,k)$ depends on $\alpha$, such a dependence could be used to strengthen the bounds for $N_\alpha(k,k)$.  Given the few known values of $N_\alpha(k,k)$, it seems plausible that $k$ always divides $N_\alpha(k,k)$.  On the other hand, if $N_\alpha(k,k)$ is independent of $\alpha$, this alone would be an interesting structural property of the set of words avoiding $k$-powers and $k$-anti-powers achieving the length $N_\alpha(k,k)$ for arbitrary $\alpha$. We believe the second case holds.
\begin{conj}
The quantity $N_\alpha(k,k)$ is independent of $\alpha$.
\end{conj}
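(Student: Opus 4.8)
The plan is to fix $k$ and prove that $N_\alpha(k,k) = N_{\alpha+1}(k,k)$ for every $\alpha \geq 2$. Since $N_\alpha(k,k)$ is nondecreasing in $\alpha$ and bounded above by $(k^3-k^2+k){k \choose 2}$ independently of $\alpha$ by Theorem \ref{nkk bounds}, the sequence $\{N_\alpha(k,k)\}_{\alpha \geq 2}$ is eventually constant; hence it suffices to rule out every individual jump. The cleanest reformulation of a jump is this: if $N_\alpha(k,k) < N_{\alpha+1}(k,k)$, then there is a word $W$ on $[\alpha+1]$ of length $N_\alpha(k,k)$ avoiding both $k$-powers and $k$-anti-powers, and $W$ must use all $\alpha+1$ letters, since otherwise relabeling gives a word of the same length on $[\alpha]$ avoiding both, contradicting the definition of $N_\alpha(k,k)$. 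I would argue by contradiction from here, with Lemma \ref{alphabet mess} supplying the structure of $W$.

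The engine of the proof would be a \emph{recoloring principle}. Under any map of alphabets, blocks that were equal stay equal, so the number of distinct blocks of any factor can only decrease; consequently merging two letters never destroys $k$-anti-power avoidance. Thus to contradict a jump it suffices to merge some pair of the $\alpha+1$ letters of $W$ \emph{without creating a $k$-power}: the resulting word lies on $[\alpha]$, has length $N_\alpha(k,k)$, and still avoids both patterns, which is impossible. A new $k$-power produced by identifying letters $a$ and $b$ would require a factor of length $km$ whose $k$ blocks become pairwise equal after the identification, i.e.\ blocks agreeing everywhere except at positions carrying $a$ against $b$; as in the periodicity analysis of Theorem \ref{nkk bounds}, the locations of such mismatches are confined to a controlled set of residues modulo $m$. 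I would run a counting argument over the ${\alpha+1 \choose 2}$ candidate merges against the relatively few ``dangerous'' identifications to exhibit a safe pair, using the case (1) hypothesis of Lemma \ref{alphabet mess} that both length-$(N_\alpha(k,k)-1)$ factors already spend all $\alpha+1$ letters.

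Before that, I would dispose of the second possibility for large $\alpha$. Case (2) of Lemma \ref{alphabet mess} yields a word on $[\alpha]$ of length $N_\alpha(k,k)-1$ carrying $\alpha$ nested $k$-power structures with strictly increasing periods $|u_1|+1 < \cdots < |u_\alpha|+1$ obeying $\gcd(|u_i|+1,|u_j|+1) \leq (|u_j|+1)/(k-1)$; exactly as in the proof of Corollary \ref{no extension}, these constraints force $|u_j| \geq k-2$ and hence a total length of at least $k\alpha + k^2 - 3k - 1$, which exceeds the upper bound of Theorem \ref{nkk bounds} once $\alpha \gtrsim \tfrac12 k^4$. Thus for each fixed $k$ and all but finitely many $\alpha$, a jump can only be of type (1), which the recoloring principle is designed to handle uniformly. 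The residual range $2 \leq \alpha \lesssim \tfrac12 k^4$, where case (2) is still live, I would attack directly through the number theory of the gcd constraints: they forbid the increasing periods from sharing large common factors, and pushing this incompatibility should again force the total length past the bound of Theorem \ref{nkk bounds}, mirroring the explicit verifications $N_\alpha(3,3)=9$ and $N_\alpha(4,4)=24$.

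The main obstacle I expect is the recoloring step for case (1). Determining, \emph{simultaneously across all block lengths $m$}, precisely which letter-identifications create a $k$-power is delicate, and the argument must be uniform in $\alpha$ rather than asymptotic. A secondary difficulty is making the number-theoretic refutation of case (2) fully effective over the entire finite range $2 \leq \alpha \lesssim \tfrac12 k^4$ rather than only for the smallest values of $k$ accessible by computer; a clean structural description of the extremal words avoiding $k$-powers and $k$-anti-powers would likely be needed to complete both steps.
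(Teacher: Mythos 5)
The statement you are addressing is stated in the paper as a \emph{conjecture}: the paper offers no proof, only the computational evidence $N_\alpha(3,3)=9$ and $N_{\alpha}(4,4)=24$ obtained by exhaustive search together with small ad hoc arguments bounding the usable alphabet size. So there is no proof in the paper to compare against; the only question is whether your plan closes the problem, and it does not.

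Your reduction is sound as far as it goes: a jump $N_\alpha(k,k)<N_{\alpha+1}(k,k)$ yields a word $W$ on $[\alpha+1]$ of length $N_\alpha(k,k)$ avoiding $k$-powers and $k$-anti-powers and using all $\alpha+1$ letters, and your recoloring observation is correct --- identifying two letters can only merge blocks, hence preserves $k$-anti-power avoidance, so a \emph{safe} merge (one creating no $k$-power) would contradict the definition of $N_\alpha(k,k)$. The gap is that you never establish that a safe merge exists, and this is not a technical loose end but essentially the whole content of the conjecture. Your proposed counting argument pits the ${\alpha+1 \choose 2}$ candidate merges against ``the relatively few dangerous identifications,'' but you give no bound on the latter: a pair $\{a,b\}$ is dangerous as soon as \emph{some} factor, for \emph{some} block length $m$, has $k$ blocks agreeing everywhere off the $\{a,b\}$-positions, and a priori every pair can be dangerous. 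Worse, the argument must work at the first possible jump $\alpha=2$, where only three merges are available and no counting over a large set of pairs can help; this is exactly the case the paper's computations address by brute force. The disposal of case (2) of Lemma \ref{alphabet mess} has the same character: the length bound $k\alpha+k^2-3k-1$ only exceeds the upper bound $(k^3-k^2+k){k \choose 2}$ of Theorem \ref{nkk bounds} for $\alpha$ on the order of $k^4$, leaving an enormous finite range that you propose to handle by unspecified ``number theory of the gcd constraints.'' As written, the proposal is a research program whose two acknowledged obstacles coincide with the open problem itself, not a proof.
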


Whether there exist aperiodic recurrent words avoiding $4$ or $5$ powers remains an open question.  One may wish to investigate other large classes of words, such as the morphic words, and their potential to avoid $k$-anti-powers.  A natural generalization is to find the structure of infinite words avoiding $(\mu_1,\dots,\mu_k)$-block-patterns other than $(k,\lambda)$-anti-powers.

Lastly, let $\A = \{a_1,\dots,a_\alpha\}$ be a finite alphabet.  The {\it Parikh vector} $\mathcal{P}(w) = (e_1,\dots,e_\alpha)$ of a finite word $w$ on $\A$ has entry $e_i$ equal to the number of instances of $a_i$ in $w$.  Define an {\it abelian $(\mu_1,\dots,\mu_k)$-block-pattern} to be a word of the form $w = w_1\cdots w_k$ where, if the set $\{1,\dots,k\}$ is partitioned via the rule $i \sim j \iff \mathcal{P}(w_i) =\mathcal{P}(w_j)$, there are $\mu_s$ parts of size $s$ for all $1 \leq s \leq k$.  Ones may ask questions similar to those addressed in this paper for abelian $(\mu_1,\dots,\mu_k)$-block-patterns.

\section{Acknowledgements}
The author would like to thank Joe Gallian for his tireless efforts to foster a productive and engaging mathematical community.  She also extends her gratitude to Colin Defant and Samuel Judge for carefully reading through this article and providing helpful feedback.  This research was conducted at the University of Minnesota, Duluth  REU  and  was  supported  by  NSF/DMS  grant  1650947  and  NSA grant H98230-18-1-0010.

\end{document}